\theoremstyle{plain}
\newtheorem{theorem}{\sc Theorem}[section]
\newcommand{\settheoremtag}[1]{
  \let\oldthetheorem\thetheorem
  \renewcommand{\thetheorem}{#1}
  \g@addto@macro\endtheorem{
    \addtocounter{theorem}{-1}
    \global\let\thetheorem\oldthetheorem}
  }
\newtheorem{prop}[theorem]{\sc Proposition}
\newtheorem{lem}[theorem]{\sc Lemma}
\newtheorem{cor}[theorem]{\sc Corollary}
\theoremstyle{definition}
\newtheorem{defn}[theorem]{\sc Definition}
\newtheorem{rem}[theorem]{\sc Remark}
\newtheorem{ex}[theorem]{\sc Example}
\newenvironment{taggedtheorem}[1]
 {\taggedtheoremx}
 {\endtaggedtheoremx}
\DeclareMathOperator{\R}{\mathbb{R}}
\newcommand{\abs}[1]{\left\lvert#1\right\rvert}
\newcommand{\norm}[1]{\left\lVert#1\right\rVert}
\newcommand{\op}[1]{\operatorname{#1}}
\renewcommand{\hat}[1]{\widehat{#1}}
\numberwithin{equation}{section}
\newtcolorbox{myhl}{breakable, myhlight}
\title{Exotic symplectomorphisms and contact circle actions}
\date{\today}
\author{Du\v{s}an Drobnjak\footnote{Faculty of mathematics, University of Belgrade, Studentski trg 16, 11158 Belgrade, Serbia;\:\:\: e-mail: dusan\_drobnjak@matf.bg.ac.rs}\hspace{0.2cm} and Igor Uljarevi\'{c}\footnote{Faculty of mathematics, University of Belgrade, Studentski trg 16, 11158 Belgrade, Serbia;\:\:\: e-mail: igoru@matf.bg.ac.rs}}
\begin{document}
\maketitle{}

\begin{abstract}
Using Floer-theoretic methods, we prove that the non-existence of an exotic symplectomorphism on the standard symplectic ball, $\mathbb{B}^{2n},$ implies a rather strict topological condition on the free contact circle actions on the standard contact sphere, $\mathbb{S}^{2n-1}.$ We also prove an analogue for a Liouville domain and contact circle actions on its boundary. Applications include results concerning the symplectic mapping class group and the fundamental group of the group of contactomorphisms. 
\end{abstract}

\section{Introduction}

A well-known open problem in symplectic topology is that of the existence of the so-called exotic symplectomorphisms on the standard $2n$-dimensional symplectic ball, $\mathbb{B}^{2n}.$ A compactly supported symplectomorphism $\mathbb{B}^{2n}\to \mathbb{B}^{2n}$ is called exotic if it is not isotopic to the identity relative to the boundary through symplectomorphisms. Apart from the cases where $n=1$ and $n=2$ (where the topology of the group of compactly supported symplectomorphisms $\mathbb{B}^{2n}\to \mathbb{B}^{2n}$ is well understood due to Gromov's theory of $J$-holomorphic curves \cite{gromov1985pseudo}) very little is known about exotic symplectomorphisms of $\mathbb{B}^{2n}.$ For instance, it is an open problem whether an exotic diffeomorphism of $\mathbb{B}^{2n}$ can be realized as a symplectomorphism with respect to the standard symplectic structure on $\mathbb{B}^{2n}$ (compare to \cite{casals2018symplectomorphisms} where such a realization is constructed in the case of a non-standard symplectic structure on a ball). In other words, it is not known whether the problem of the existence of the exotic symplectomorphisms on the standard symplectic ball can be solved in the framework of differential topology.

On the other hand, there are numerous results regarding exotic symplectomorphisms on other symplectic manifolds \cite{seidel1997floer, seidel2008lectures, chiang2014open, seidel2015exotic, chiang2016non, tonkonog2016commuting, shevchishin2017elliptic, uljarevic2017floer, barth2019diffeomorphism, uljarevic2019viterbo}. The most notable is work of Seidel \cite{seidel1997floer} who constructed the first known example of a symplectomorphism that is smoothly isotopic to the identity but not symplectically, thus proving that exotic symplectomorphisms are genuinely a symplectic phenomenon.

In the present paper, we prove that non-existence of an exotic symplectomorphism on the standard $\mathbb{B}^{2n}$ implies a rather strict topological condition on the free contact circle actions on its boundary (i.e. the standard contact sphere $\mathbb{S}^{2n-1}$). We call this condition \emph{topological symmetry.} It is expressed in terms of the reduced homology (denoted by $\tilde{H}_\ast$ below) of the part of $\mathbb{S}^{2n-1}$ on which the circle action is positively transverse to the contact distribution.

\begin{defn}
Let $\xi$ be a cooriented contact distribution on $\mathbb{S}^{2n-1},$ and let $\varphi_t:(\mathbb{S}^{2n-1},\xi)\to (\mathbb{S}^{2n-1},\xi)$ be a free contact circle action. Denote by $Y$ the vector field that generates $\varphi_t,$ and by $P\subset \mathbb{S}^{2n-1}$ the set of points in $\mathbb{S}^{2n-1}$ at which the vector field $Y$ represents the positive coorientation of $\xi.$ The contact circle action $\varphi$ is said to be \emph{topologically symmetric} if there exists $m\in\mathbb{Z}$ such that
\[ \dim \tilde{H}_{m-k}(P;\mathbb{Z}_2)= \dim \tilde{H}_{k}(P;\mathbb{Z}_2)\]
for all $k\in\mathbb{Z}.$
\end{defn}

\begin{theorem}\label{thm:alternatives}
For all $n\in\mathbb{N},$ at least one of the following statements is true.
\begin{enumerate}[A]
    \item There exists an exotic symplectomorphism of $\mathbb{B}^{2n}$ .
    \item Every free contact circle action on $\mathbb{S}^{2n-1}$ is topologically symmetric.
\end{enumerate}
\end{theorem}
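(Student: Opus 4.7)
The plan is to convert the free contact circle action on $\mathbb{S}^{2n-1}$ into a compactly supported symplectomorphism $\Phi$ of $\mathbb{B}^{2n}$, to compute a Floer-theoretic invariant of $\Phi$ in terms of $\tilde{H}_\ast(P;\mathbb{Z}_2)$, and to extract the required symmetry from a Poincar\'e-type duality that becomes available exactly when $\Phi$ is symplectically isotopic to the identity relative to the boundary --- that is, when statement A of Theorem~\ref{thm:alternatives} fails.

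To construct $\Phi$, let $H=\alpha_0(Y)$ be the contact Hamiltonian generating $\varphi_t$ with respect to the standard contact form $\alpha_0$, so that $H>0$ precisely on $P$. Extend $H$ via the Liouville flow to a Hamiltonian $\tilde{H}$ on $\mathbb{B}^{2n}$, homogeneous of degree $2$ near the boundary and cut off smoothly near the origin. Because the circle action closes up ($\varphi_1=\mathrm{id}$), one may arrange the time-$1$ map $\Phi$ of $\tilde{H}$ to be compactly supported and equal to the identity in a collar of $\partial\mathbb{B}^{2n}$.

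The second step is to compute the fixed-point Floer homology $HF_\ast(\Phi)$. After a small time-dependent Morse perturbation supported near $\mathbb{S}^{2n-1}$, the relevant $1$-periodic orbits become isolated and split into two groups: those in bijection with the critical points of an auxiliary Morse function on the closed subset $P\subset\mathbb{S}^{2n-1}$, and the standard interior orbits coming from $\mathbb{B}^{2n}$ itself. Careful control of the action filtration and of the continuation maps identifies $HF_\ast(\Phi)\cong\tilde{H}_{\ast-d}(P;\mathbb{Z}_2)$ for a shift $d$ determined by Conley--Zehnder indices; applied to the inverse, the same computation gives $HF_\ast(\Phi^{-1})\cong\tilde{H}_{\ast-d'}(\mathbb{S}^{2n-1}\setminus P;\mathbb{Z}_2)$, since the positivity region of $\varphi_t^{-1}$ is the complement of $P$.

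If statement A fails, then $\Phi$ and hence also $\Phi^{-1}$ is symplectically isotopic to the identity through compactly supported symplectomorphisms, so continuation produces isomorphisms relating $HF_\ast(\Phi)$ and $HF_\ast(\Phi^{-1})$. Combining these with the Floer-theoretic Poincar\'e duality $HF_k(\Phi)\cong HF_{2n-k}(\Phi^{-1})^\vee$ and with Alexander duality for the pair $(P,\,\mathbb{S}^{2n-1}\setminus P)$ yields the symmetry $\dim\tilde{H}_{m-k}(P;\mathbb{Z}_2)=\dim\tilde{H}_k(P;\mathbb{Z}_2)$ required by statement B. The hardest point will be the Floer computation: designing a perturbation scheme in which only orbits in the positivity region survive, ruling out Floer trajectories that escape to the origin or cross into the negative region, and pinning down the grading shift $d$ with correct sign conventions so that the two dualities assemble into a single symmetry statement on $P$.
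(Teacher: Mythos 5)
Your overall strategy matches the paper in broad outline: build a compactly supported symplectomorphism $\Phi$ of $\mathbb{B}^{2n}$ from the circle action (this is what the boundary map $\Theta$ of the Biran--Giroux fibration produces), compute a Floer-theoretic invariant in terms of the positivity region, and extract a symmetry when $\Phi$ is trivial in $\pi_0\op{Symp}_c(\mathbb{B}^{2n})$. But the crucial step in your argument --- \emph{``continuation produces isomorphisms relating $HF_\ast(\Phi)$ and $HF_\ast(\Phi^{-1})$''} --- does not work, and this is exactly where the paper's mechanism is different. In the non-compact setting the Floer groups depend on the slope of the Hamiltonian near the boundary; after perturbing, $\Phi$ corresponds to slope $\varepsilon h$ and $\Phi^{-1}$ (or equivalently the other side of the identity) to slope $(\varepsilon-1)h$. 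Since $h$ changes sign on $\mathbb{S}^{2n-1}$, neither $\varepsilon h\leqslant(\varepsilon-1)h$ nor $\varepsilon h\geqslant(\varepsilon-1)h$ holds, so a continuation map in either direction is not even defined --- let alone an isomorphism. The paper replaces this step by the naturality isomorphism $\mathcal{N}(F):HF(\varepsilon h)\to HF((\varepsilon-1)h)$, a chain-level isomorphism associated to the non-compactly-supported \emph{loop} of Hamiltonian diffeomorphisms $\phi^F_t$ that exists precisely because $\phi^H_1=\phi^G_1$ with $G$ compactly supported; this loop is what effects the slope change. A compactly supported isotopy from $\Phi$ to the identity cannot change the slope at infinity, so it cannot substitute for this.

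Two further points. First, the paper does not use a ``Floer-theoretic Poincar\'e duality'' $HF_k(\Phi)\cong HF_{2n-k}(\Phi^{-1})^\vee$. Instead it computes $HF_k(\varepsilon h)\cong H_{k+n}(W,\Sigma^+;\mathbb{Z}_2)$ and $HF_k(-\varepsilon h)\cong H_{k+n}(W,\Sigma^-;\mathbb{Z}_2)$ by a Morse-theoretic argument on a doubled Liouville domain (Proposition~\ref{prop:floertosingular}), and then applies classical Lefschetz duality $H_k(W,\Sigma^-)\cong H^{2n-k}(W,\Sigma^+)$ for the pair of boundary pieces --- no duality at the Floer level is needed, and indeed such a duality in the Liouville setting would again have to pair opposite slopes, returning you to the problem above. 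Second, your description of the orbit count (``critical points of an auxiliary Morse function on the closed subset $P$'' plus ``interior orbits'') does not match what happens: in the computation, all generators are interior critical points of a $C^2$-small Morse function on (a double of) the filling, and the region $\Sigma^+$ enters through the relative homology $H_\ast(W,\Sigma^+)$ produced by the flow exiting through part of the boundary, not through boundary orbits. Finally, note that the paper proves the theorem as a corollary of a statement for general Liouville fillings with $c_1=0$ (Theorem~\ref{thm:main}) together with Lemma~\ref{lem:topsymgen2s} identifying $H_{\ast+1}(\mathbb{B}^{2n},P)$ with $\tilde{H}_\ast(P)$; your proposal works directly with the ball, which is fine, but the two nontrivial ingredients (the naturality argument and the Morse-theoretic identification of the Floer groups) are the same bottleneck in either formulation and are the points your sketch leaves open or substitutes with tools that fail here.
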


In fact, we prove a more general statement about exotic symplectomorphisms of Liouville domains and contact circle actions on their boundaries (see Theorem~\ref{thm:main} on page~\pageref{thm:main}). A consequence of this more general result is that topologically asymmetric free contact circle actions represent nontrivial elements in the fundamental group of the group of contactomorphisms (for a precise statement see Corollary~\ref{cor:fundgroup} on page~\pageref{cor:fundgroup}).

Theorem~\ref{thm:alternatives} together with the result of Gromov that there are no exotic symplectomorphisms on $\mathbb{B}^4$ implies the following.

\begin{cor}
Every free contact circle action on the standard contact $\mathbb{S}^3$ is topologically symmetric.
\end{cor}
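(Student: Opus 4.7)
The plan is to specialize Theorem~\ref{thm:alternatives} to $n=2$ and then rule out the first alternative using Gromov's foundational result on pseudoholomorphic curves.

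First I would apply Theorem~\ref{thm:alternatives} with $n=2$. This produces the dichotomy: either there is an exotic symplectomorphism of $\mathbb{B}^4$, or every free contact circle action on the standard contact $\mathbb{S}^3$ is topologically symmetric. To establish the corollary, it therefore suffices to exclude alternative A in the case $n=2$.

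Next, I would invoke Gromov's theorem from \cite{gromov1985pseudo}, according to which the group of compactly supported symplectomorphisms of the standard symplectic ball $\mathbb{B}^4$ is contractible (and in particular path-connected). As a consequence, every compactly supported symplectomorphism of $\mathbb{B}^4$ is isotopic to the identity through compactly supported symplectomorphisms, which, by the convention that compactly supported maps coincide with the identity near the boundary, yields a symplectic isotopy relative to $\partial \mathbb{B}^4$. Hence no exotic symplectomorphism of $\mathbb{B}^4$ exists, so alternative A fails and alternative B must hold.

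There is no real obstacle to overcome in this argument: all the technical content is absorbed into Theorem~\ref{thm:alternatives} (which is the deep Floer-theoretic result of the paper) and into Gromov's theorem (which is quoted as a black box). The corollary is thus a two-step combination, and the only thing to verify carefully is that Gromov's statement about $\operatorname{Symp}_c(\mathbb{B}^4)$ is phrased in the same sense of \emph{exotic} used in the paper, namely isotopy relative to the boundary through symplectomorphisms, which follows immediately from the compact-support convention.
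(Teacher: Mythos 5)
Your proposal matches the paper's argument exactly: the corollary is stated immediately after the remark that Theorem~\ref{thm:alternatives} together with Gromov's result on the non-existence of exotic symplectomorphisms of $\mathbb{B}^4$ yields the claim. Your unpacking of Gromov's theorem (contractibility of $\operatorname{Symp}_c(\mathbb{B}^4)$ implying path-connectedness, hence ruling out alternative~A) is the intended reasoning.
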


The paper is organized as follows. Section~\ref{sec:prel} recalls the preliminaries. Section~\ref{sec:topsym} introduces the notion of \emph{topological symmetry} and proves some of the basic properties of the free contact circle actions. Section~{\ref{sec:app}} proves Theorem~{\ref{thm:alternatives}}. Section~\ref{sec:morse} contains the main technical result, that relies on the Morse theory on a manifold with boundary. 

\subsection*{Acknowledgements}
\sloppy We would like to thank Paul Biran, Aleksandra Marinkovi\'{c}, Darko Milinkovi\'{c}, Vuka\v{s}in Stojisavljevi\'{c}, and  Filip \v{Z}ivanovi\'{c}  for useful feedback. This work was partially supported by the Ministry of Education, Science, and Technological development, grant number 174034.
\section{Preliminaries}\label{sec:prel}

\subsection{Notation and conventions}

Let $(W,\omega)$ be a symplectic manifold. The Hamiltonian vector field, $X_{H_t},$ of the Hamiltonian $H_t: W\to\R$ is the vector field on $W$ defined by $\omega(X_{H_t},\cdot)=dH_t.$ We denote by $\phi^H_t:W\to W$ the Hamiltonian isotopy of the Hamiltonian $H_t:W\to\R,$ i.e. $\partial_t\phi^H_t= X_{H_t}\circ\phi^H_t$ and $\phi_0^H=\op{id}.$

Let $\Sigma$ be a contact manifold with a contact form $\alpha.$ The Reeb vector field $R^\alpha$ is the unique vector filed on $\Sigma$ such that $\alpha(R^\alpha)=1$ and such that $d\alpha(R^\alpha, \cdot)=0$. We denote by $\varphi_t^h:\Sigma\to \Sigma$ the contact isotopy furnished by a contact Hamiltonian $h_t:\Sigma\to\R.$ In our conventions, the vector field $Y$ of the isotopy $\varphi^h_t$ and the contact Hamiltonian $h$ are related by $h=-\alpha(Y).$ A contact Hamiltonian $h_t:\Sigma\to \R$ is called strict (with respect to the contact form $\alpha$) if $dh_t(R^\alpha)=0.$ This is equivalent to $\left(\varphi_t^h\right)^\ast \alpha=\alpha$, for all $t\in\R$.

\subsection{Liouville domains}

\begin{defn}
A Liouville domain is a compact manifold $W$ together with a 1-form $\lambda$ such that the following holds.
\begin{itemize}
    \item The 2-form $d\lambda$ is a symplectic form on $W.$
    \item The restriction of $\lambda$ to the boundary $\partial W$ is a contact form that induces the boundary orientation on $\partial W.$ 
\end{itemize}
\end{defn}

A part of the symplectization of $\partial W$ naturally embeds into $W.$ More precisely, there exists a unique embedding $\iota :\partial W\times(0,1]\to W$ such that $\iota(x,1)=x,$ and such that $\iota^\ast \lambda= r\cdot \alpha.$ Here, $r$ is the $(0,1]$ coordinate function and $\alpha:=\left.\lambda\right|_{\partial W}$ is the contact form on $\partial W.$ The completion, $\hat{W},$ of the Liouville domain $W$ is obtained by gluing $W$ and $\partial W\times (0,\infty)$ via $\iota.$ The sets $W$ and $\partial W\times(0,\infty)$ can be seen as subsets of the completion, $\hat{W}.$ The manifold $\hat{W}$ is an exact symplectic manifold with a Liouville form given by
\[ \hat{\lambda}:=\left\lbrace \begin{matrix} \lambda & \text{on }W,\\ r\cdot\alpha&\text{on } \partial W\times (0,\infty).  \end{matrix}\right. \]
With a slight abuse of notation, we will write $\lambda$ instead of $\hat{\lambda}.$

\subsection{A homotopy long exact sequence}\label{sec:les}

In this section, we describe a construction that is due to Biran and Giroux \cite{biran2005symplectic}. Given a Liouville domain $(W,\lambda),$ denote by $\Sigma$ the manifold $\partial W,$ and by $\alpha$ the contact form $\left.\lambda\right|_{\partial W}$ on $\Sigma.$ Denote by $\op{Cont}\Sigma, \op{Symp}_cW, \op{Symp}(W,\lambda), $ and $\op{Symp}_c(W,\lambda)$ the groups of diffeomorphisms defined in the list below.

\begin{description}
\item[$\op{Cont} \Sigma$] is the group of the contactomorphisms of $(\Sigma,\ker \alpha)$.
\item[$ \op{Symp}_cW$] is the group of the symplectomorphisms of $(W,d\lambda)$ that are equal to the identity in a neighbourhood of the boundary.
\item[$\op{Symp}(W,\lambda)$] is the group of the so-called exact symplectomorphisms of $(\hat{W}, d\lambda)$ that preserve the Liouville form $\lambda$ in $\Sigma\times[1,\infty).$ More precisely, a symplectomorphism $\phi:\hat{W}\to\hat{W}$ is an element of $\op{Symp}(W,\lambda)$ if, and only if, there exists a smooth function $F:\hat{W}\to\R$ such that $\left.F\right|_{\Sigma\times[1-\varepsilon,\infty)}\equiv 0,$ for some $\varepsilon>0,$ and such that $\phi^\ast \lambda-\lambda = dF.$
\item[$\op{Symp}_c(W,\lambda)$] is the group of the symplectomorphisms in $\op{Symp}(W,\lambda)$ that are equal to the identity on $\Sigma\times(1-\varepsilon,\infty)$ for some $\varepsilon>0.$
\end{description}

Since a symplectomorphism $\phi\in\op{Symp}(W,\lambda)$ preserves the Liouville form $\lambda$ on the cylindrical end $\Sigma\times [1,\infty),$ there exists a contactomorphism $\varphi:\Sigma\to\Sigma$ such that $\phi$ has the following form on the cylindrical end
\[\phi(x,r)=\left( \varphi(x), ? \right) \in\Sigma\times(0,\infty)\]
for all $(x,r)\in\Sigma\times[a,\infty),$ where $a\in\R^+$ is big enough. The map $\op{Symp}(W,\lambda)\to \op{Cont}\Sigma$ defined by $\phi\mapsto\varphi$ is called the ideal restriction map. It turns out that the ideal restriction map is a Serre fibration with the fibre above the identity equal to $\op{Symp}_c(W,\lambda).$ Hence, there is a homotopy long exact sequence
\[\begin{tikzcd}[column sep=small]
\cdots \arrow{r} & \pi_k\op{Symp}_c(W,\lambda) \arrow{r}
& \pi_k\op{Symp}(W,\lambda) \arrow{r}
\arrow[draw=none]{d}[name=Z, shape=coordinate]{}
& \pi_k\op{Cont}\Sigma \arrow[rounded corners,
to path={ -- ([xshift=2ex]\tikztostart.east)
|- (Z) [near end]\tikztonodes
-| ([xshift=-2ex]\tikztotarget.west)
-- (\tikztotarget)}]
{dll}{ } \\
 \arrow[draw=none]{r} & \pi_{k-1}\op{Symp}_c(W,\lambda)\arrow{r}
& \cdots.
\end{tikzcd}\]
The groups $\op{Symp}_c(W,\lambda)$ and $\op{Symp}_c W$ are homotopy equivalent \cite{biran2005symplectic} (see also \cite[Lemma~3.3]{uljarevic2017floer}). Therefore, there is a homotopy long exact sequence
\[\begin{tikzcd}[column sep=small]
\cdots \arrow{r} & \pi_k\op{Symp}_c W  \arrow{r}
& \pi_k\op{Symp}(W,\lambda) \arrow{r}
\arrow[draw=none]{d}[name=Z, shape=coordinate]{}
& \pi_k\op{Cont}\Sigma \arrow[rounded corners,
to path={ -- ([xshift=2ex]\tikztostart.east)
|- (Z) [near end]\tikztonodes
-| ([xshift=-2ex]\tikztotarget.west)
-- (\tikztotarget)}]
{dll}{ } \\
 \arrow[draw=none]{r} & \pi_{k-1}\op{Symp}_c W\arrow{r}
& \cdots.
\end{tikzcd}\]
Particularly important in this paper is the boundary map
\[\Theta: \pi_1\op{Cont}\Sigma\to \pi_0\op{Symp}_c W\]
from the exact sequence above. The map $\Theta$ can be described as follows. Given a loop 
\[\varphi:\mathbb{S}^1\to \op{Cont}\Sigma :t\mapsto \varphi_t.\]
Let $h_t:\Sigma\to\R$ be a contact Hamiltonian that generates it. Choose a Hamiltonian $H_t:\hat{W}\to\R$ such that 
\[H_t(x,r)=r\cdot h_t(x)\quad\text{for}\quad (x,r)\in\Sigma\times (1-\varepsilon,\infty)\]
for some $\varepsilon>0.$ Then, $\Theta([\varphi])=\left[ \phi^H_1 \right].$

\subsection{Floer theory}

The Floer homology that is utilized in this paper is the Floer homology for contact Hamiltonians. It has been introduced by Merry and the second author in \cite{merry2019maximum} as a consequence of a generalized no-escape lemma. Our results, however, use only the case of strict contact Hamiltonians (i.e. contact Hamiltonians whose isotopies preserve not only the contact distribution but also the contact form), the case considered already in \cite{ritter2016circle}, \cite{Fauck2016Rabinowitz-Floer} and \cite{uljarevic2017floer}.

The Floer homology for a contact Hamiltonian, $HF_\ast(h),$ is associated to a contact Hamiltonian $h:\partial W\times \mathbb{S}^1\to \R$ defined on the boundary of a Liouville domain $W$ such that the time-one map of $h$ has no fixed points. By definition, $HF_\ast(h)$ is equal to the Hamiltonian loop Floer homology for a Hamiltonian $H:\hat{W}\times \mathbb{S}^1\to\R$ such that $H_t(x,r)=r\cdot h_t(x)$ whenever $(x,r)\in\partial W\times [1,\infty).$ 

\subsubsection{Floer data}

To define $HF_\ast(h),$ one has to choose auxiliary data consisting of a Hamiltonian $H: \hat{W}\times\mathbb{S}^1\to \R$ and an $\mathbb{S}^1$ family $\{J_t\}_{t\in\mathbb{S}^1}$ of almost complex structures on $\hat{W}$ that satisfy the following conditions.

\begin{enumerate}
    \item Conditions on the cylindrical end.
    \begin{itemize}
        \item $H_t(x,r)=r\cdot h_t(x),$ for $(x,r)\in \partial W\times[1,+\infty),$
        \item $dr\circ J_t=-\lambda,$ in $ W\times[1,+\infty).$
    \end{itemize}
    \item Non-degeneracy. For each fixed point $x$ of $\phi_1^H:\hat{W}\to\hat{W},$
    \[\det \left( d\phi_1^H(x)-\op{id} \right)\not=0.\]
    \item $d\lambda$-compatibility. $d\lambda(\cdot, J_t\cdot)$ is an $\mathbb{S}^1$ family of Riemannian metrics on $\hat{W}.$
    \item Regularity. The linearized operator of the Floer equation
    \begin{align*}
        & u:\R\times\mathbb{S}^1\to \hat{W},\\
        & \partial_s u + J_t(u) (\partial_t u - X_{H_t}(u))=0
    \end{align*}
    is surjective.
\end{enumerate}

\subsubsection{Floer complex}

The Floer complex, $CF_\ast(H,J),$ is generated by the contractible 1-periodic orbits of the Hamiltonian $H.$ I.e.
\[ CF_k(H,J):=\bigoplus_{\deg \gamma=k}\mathbb{Z}_2\left\langle \gamma\right\rangle, \]
where the sum goes through the set of the contractible 1-periodic orbits of $H$ that have the degree equal to $k.$ The degree, $\deg \gamma,$ is defined as the negative Conley-Zehnder index of the path of symplectic matrices obtained from $d\phi_t^H(\gamma(0))$ by trivializing $T\hat{W}$ along a disk that bounds $\gamma.$ Due to different choices of the disk that bounds $\gamma,$ $\deg\gamma$ is only defined up to a multiple of $2N,$ where
\[N:=\min \left\{c_1(u)>0 \quad |\quad  u:\mathbb{S}^2\to\hat{W}\right\}\]
is the minimal Chern number. Hence we see $\deg\gamma$ as an element of $\mathbb{Z}_{2N},$ and the Floer complex $CF_\ast(H,J)$ is $\mathbb{Z}_{2N}$ graded. The differential $\partial : CF_{k+1}(H,J)\to CF_k(H,J)$ is defined by counting the isolated unparametrized solutions of the Floer equation. More precisely,
\[\partial \left\langle\gamma\right\rangle =\sum n(\gamma,\tilde{\gamma})\left\langle \Tilde{\gamma} \right\rangle,\]
where $n(\gamma,\tilde{\gamma})$ is the number modulo 2 of the isolated unparametrized solutions $u: \mathbb{R}\times\mathbb{S}^1\to \hat{W}$ of the following problem
\begin{align*}
    & \partial_su+ J_t(u)(\partial_tu-X_{H_t}(u))=0,\\
    &\lim_{s\to-\infty}u(s,t)=\gamma(t),\\
    &\lim_{s\to+\infty}u(s,t)=\tilde{\gamma}(t).
\end{align*}
Note that, if $\deg\tilde{\gamma}\not=\deg\gamma-1,$ then there are no isolated unparametrized solutions of the Floer equation from $\gamma$ to $\tilde{\gamma}.$ Hence, in this case, $n(\gamma,\tilde{\gamma})=0,$ and $\partial$ is well defined. The Floer homology, $HF_\ast(H,J),$ is the homology of the Floer complex $CF_\ast(H,J).$

\subsubsection{Continuation maps}

Given Floer data $(H^-,J^-)$ and $(H^+, J^+).$ Continuation data from $(H^-,J^-)$ to $(H^+, J^+)$ consists of a ($s$-dependent) Hamiltonian $H:\hat{W}\times \R\times\mathbb{S}^1\to \R$ and a family $\{J_{s,t}\}_{(s,t)\in\R\times\mathbb{S}^1}$ of almost complex structures on $\hat{W}$ such that
\begin{enumerate}
    \item \label{item:slope}there exists a smooth function $h:\partial W\times \R\times\mathbb{S}^1\to \R$ that is non-decreasing in $\R$-variable and such that
    \[H_{s,t}(x,r):=H((x,r),s,t)= r\cdot h(x,s,t)=: r\cdot h_{s,t}(x)\]
    for all $(x,r)\in\partial W\times [1,\infty),$
    \item $dr\circ J_{s,t}=-\lambda$ in $\partial W\times [1,\infty),$ for all $(s,t)\in\R\times\mathbb{S}^1,$
    \item $d\lambda(\cdot, J_{s,t}\cdot)$ is a Riemannian metric on $\hat{W}$ for all $(s,t)\in\R\times\mathbb{S}^1,$
    \item $(H_{s,t}, J_{s,t})=(H^\pm_t,J^\pm_t)$ for $\pm s>> 0.$
\end{enumerate}

Let $\gamma^-$ be a $1$-periodic orbit of $H^-,$ and let $\gamma^+$ be a 1-periodic orbit of $H^+.$ For generic continuation data $(H_{s,t}, J_{s,t})$ from $(H^-,J^-)$ to $(H^+,J^+),$ the set of the solutions $u:\R\times\mathbb{S}^1\to\hat{W}$ of the problem
\begin{align*}
    & \partial_s u+ J_{s,t}(u)(\partial_t u- X_{H_{s,t}}(u))=0,\\
    &\lim_{s\to\pm\infty}u(s,t)=\gamma^{\pm}(t)
\end{align*}
is a finite union of compact manifolds (possibly of different dimensions) cut out transversely by the Floer equation. Denote by $n(\gamma^-,\gamma^+)$ the number of its 0-dimensional components. The continuation map
\[\Phi=\Phi(\{H_{s,t}\}, \{J_{s,t}\}): CF_\ast(H^-,J^-)\to CF_\ast(H^+, J^+)\]
is the linear map defined on the generators by
\[\Phi(\gamma^-):=\sum_{\gamma^+}n(\gamma^-,\gamma^+)\left\langle \gamma^+ \right\rangle.\]
Since there are no 0-dimensional components of the above mentioned manifold if $\deg\gamma^-\not=\deg \gamma^+,$ continuation maps preserve the grading. By the condition~\ref{item:slope} for continuation data, continuation maps  
\[CF_\ast(H^-, J^-)\to CF_\ast(H^+,J^+)\]
are defined only if $H^-\leqslant H^+$ on the cylindrical end $\partial W\times[1,\infty).$ The continuation maps $CF_\ast(H^-, J^-)\to CF_\ast(H^+,J^+)$ defined with respect to different continuation data form $(H^-,J^-)$ to $(H^+,J^+)$ are chain homotopic. Hence, they induce the same map $HF_\ast(H^-, J^-)\to HF_\ast(H^+,J^+)$ on the homology level. The induced map is also called the continuation map. As opposed to the compact case, the continuation maps need not be isomorphisms. They do, however, satisfy the following relations.
\begin{enumerate}
    \item The continuation map $\Phi_\alpha^\alpha: HF_\ast(H^\alpha, J^\alpha)\to HF_\ast(H^\alpha,J^\alpha)$ is equal to the identity.
    \item The composition of the continuation maps 
    \[\Phi_\alpha^\beta: HF_\ast(H^\alpha, J^\alpha)\to HF_\ast(H^\beta,J^\beta)\] 
    and 
    \[\Phi_\beta^\gamma: HF_\ast(H^\beta, J^\beta)\to HF_\ast(H^\gamma,J^\gamma)\] 
    is equal to the continuation map 
    \[\Phi_\alpha^\gamma: HF_\ast(H^\alpha, J^\alpha)\to HF_\ast(H^\gamma,J^\gamma),\] 
    i.e. $\Phi^\gamma_\beta\circ\Phi_\alpha^\beta=\Phi_\alpha^\gamma.$
\end{enumerate}
In other words, the family of groups $\{HF_\ast(H,J)\}$ together with the continuation maps form a directed system of groups. As a consequence, the groups $HF_\ast(H^\alpha, J^\alpha)$ and $HF_\ast(H^\beta, J^\beta)$ are canonically isomorphic if $H^\alpha= H^\beta$ on $\partial W\times[1,\infty).$ Therefore, the group $HF_\ast(h),$ where $h:\partial W\times\mathbb{S}^1\to\R$ is a 1-periodic contact Hamiltonian whose time-1 map has no fixed points, is well defined. Moreover, if $h^\alpha, h^\beta:\partial W\times\mathbb{S}^1\to\R$ are two 1-periodic contact Hamiltonians whose time-1 maps have no fixed points and such that $h^\alpha\leqslant h^\beta,$ then there is a well defined continuation map $HF_\ast(h^\alpha)\to HF_\ast(h^\beta).$

\subsubsection{Naturality isomorphisms}

Let $H, F$ be Hamiltonians, denote by $\overline{H}$ and $H\# F$ the Hamiltonians that generate Hamiltonian isotopies $\left(\phi^H_t\right)^{-1}$ and $\phi_t^H\circ\phi^F_t,$ respectively. The naturality isomorphism
\[\mathcal{N}(F): HF(H)\to HF(\overline{F}\# H)\]
is associated to a $1$-periodic Hamiltonian $F:\hat{W}\times\mathbb{S}^1\to\R$ whose Hamiltonian isotopy is 1-periodic (i.e. $F$ generates a loop of Hamiltonian diffeomorphisms). On the chain level, on generators, the map $\mathcal{N}(F)$ is defined by
\[ \left\langle \gamma\right\rangle\mapsto \left\langle (\phi^F)^\ast\gamma \right\rangle, \]
where $(\phi^F)^\ast\gamma :\mathbb{S}^1\to\hat{W}$ is the loop $t\mapsto (\phi^F_t)^{-1}(\gamma(t)).$ The naturality map is an isomorphism already on the chain level. The naturality isomorphisms do not preserve the grading in general. They do respect the grading up to a constant shift though.

\section{Topologically symmetric contact circle actions}\label{sec:topsym}

A contact circle action is a Lie group action of $\mathbb{S}^1$ on a contact manifold by contactomorphisms. A contact circle action on $\Sigma$ can be seen as a 1-periodic family $\varphi_t:\Sigma\to\Sigma, t\in\mathbb{R}$ of contactomorphisms such that
\[\varphi_s\circ\varphi_t=\varphi_{s+t},\]
for all $s, t\in\R.$ In particular, every contact circle action on $\Sigma$ can be seen as a flow of an autonomous vector field on $\Sigma.$

\begin{defn}\label{def:topsym}
Let $\varphi_t:\Sigma\to\Sigma$ be a contact circle action on a (cooriented) contact manifold $(\Sigma,\xi)$ that is the boundary of a Liouville domain $W$ with $c_1(W)=0.$ Let $h:\Sigma\to\R$ be the contact Hamiltonian that generates $\varphi_t$ defined with respect to some contact form on $\Sigma.$ Denote
\[ \Sigma^+:=\left\{ p\in\Sigma\:|\: h(p)>0 \right\}. \]
The contact circle action $\varphi_t$ is called \emph{topologically symmetric} (with respect to the filling $W$) if there exists an integer $m\in\mathbb{Z}$ such that
\[(\forall k\in\mathbb{Z})\quad \dim H_{m-k}(W,\Sigma^+;\mathbb{Z}_2)= \dim H_{k}(W,\Sigma^+;\mathbb{Z}_2).\]
Here, $H_k(W,\Sigma^+;\mathbb{Z}_2)$ stands for the singular homology of the pair $(W,\Sigma^+)$ in $\mathbb{Z}_2$ coefficients. The set $\Sigma^+$ is referred to as the \emph{positive region} of the contact circle action $\varphi.$ Similarly, the \emph{negative region} of the contact circle action $\varphi$ is the set
\[\Sigma^-:=\left\{ p\in\Sigma\:|\: h(p)<0 \right\}. \]
\end{defn}

The set $\Sigma^+$ in the definition above (and consequently the notion of the topologically symmetric contact circle action) does not depend on the choice of the contact form on $\Sigma,$ although $h$ does. Namely, $\Sigma^+$ can be defined as the set of the points $p\in\Sigma$ such that the vector field of $\varphi_t$ at the point $p$ represents the negative coorientation of the contact distribution $\xi$ at the point $p.$ 

\begin{ex}
The Reeb flow on the standard contact sphere $\Sigma:=\mathbb{S}^{2n-1}$ is an example of a topologically symmetric contact circle action with respect to the standard symplectic ball $W:=\mathbb{B}^{2n}$. Indeed, the Reeb flow is generated by the constant contact Hamiltonian
\[\Sigma\to\R\quad:\quad x\mapsto -1.\]
Hence, the corresponding positive region $\Sigma^+$ is equal to the empty set. Consequently, $H_\ast(W,\Sigma^+;\mathbb{Z}_2)$ is isomorphic to $H_\ast(\mathbb{B}^{2n}; \mathbb{Z}_2)$. This implies the topological symmetry.
\end{ex}

\begin{ex}
Let $\varepsilon>0$ be a sufficiently small number and let $W$ be the Brieskorn variety
\[W:=\left\{ (z_0,\ldots, z_n)\in\mathbb{C}^{n+1}\:|\: z_0^3 + \cdots + z_n^3 =\varepsilon\:\&\: \abs{z}\leqslant 1 \right\}.\]
The boundary $\partial W$ is contactomorphic to the Brieskorn manifold
\[ \Sigma:=\left\{ (z_0,\ldots, z_n)\in\mathbb{C}^{n+1}\:|\: z_0^3 + \cdots + z_n^3 =0\:\&\: \abs{z}= 1 \right\}. \]
Consider the free contact circle action on $\Sigma$ given by
\[ \Sigma\times\R\quad\ni\quad (x,t) \quad\mapsto \quad e^{\frac{2\pi i t}{3}}\cdot z\quad \in \quad \Sigma. \]
\sloppy The positive region, $\Sigma^+$, is equal to the empty set. Therefore, $H_\ast(W,\Sigma^+;\mathbb{Z}_2)$ is isomorphic to $H_\ast(W;\mathbb{Z}_2)$. The Brieskorn variety $W$ is homotopy equivalent to the wedge of $2^n$ copies of $\mathbb{S}^n$. Hence, the contact circle action above is not topologically symmetric. For a detailed account on Brieskorn manifolds, see \cite{kwon2016brieskorn}.
\end{ex}

\begin{lem}\label{lem:strictcontactomorphisms}
Let $\varphi_t:\Sigma\to \Sigma$ be a contact circle action on a cooriented contact manifold $\Sigma.$ Then, there exists a contact form $\alpha$ on $\Sigma$ such that $\varphi_t^\ast\alpha=\alpha$ for all $t\in \R.$
\end{lem}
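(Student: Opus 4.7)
The plan is to use the classical averaging trick: take any contact form $\alpha_0$ on $\Sigma$ compatible with the given coorientation of $\xi$ (which exists since $\xi$ is cooriented), and define
\[ \alpha := \int_0^1 \varphi_t^\ast \alpha_0 \, dt. \]
Since $\varphi_t$ is $1$-periodic, this integral makes sense, and $\alpha$ is a smooth $1$-form on $\Sigma$.

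The first step is to verify that $\alpha$ is again a contact form with $\ker \alpha = \xi$. Each $\varphi_t$ is a contactomorphism, so $\varphi_t^\ast \alpha_0$ has kernel equal to $\xi$; moreover, since $\varphi_t$ lies in the connected path from $\varphi_0=\op{id}$ to itself through contactomorphisms and the coorientation of $\xi$ is preserved along such a path (this being an open-closed condition), we have $\varphi_t^\ast \alpha_0 = f_t \cdot \alpha_0$ for a smooth positive function $f_t$ on $\Sigma$. Consequently
\[ \alpha = \left(\int_0^1 f_t \, dt \right) \cdot \alpha_0, \]
which is a positive smooth multiple of $\alpha_0$, hence a contact form defining $\xi$ with the same coorientation.

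The second step is invariance. For any $s\in\R$, using the group law $\varphi_s \circ \varphi_t = \varphi_{s+t}$,
\[ \varphi_s^\ast \alpha = \int_0^1 (\varphi_t\circ\varphi_s)^\ast \alpha_0 \, dt = \int_0^1 \varphi_{s+t}^\ast \alpha_0 \, dt = \int_s^{s+1} \varphi_u^\ast \alpha_0 \, du. \]
Since the integrand $u\mapsto \varphi_u^\ast \alpha_0$ is $1$-periodic in $u$ (by the $\mathbb{S}^1$-action property $\varphi_{u+1}=\varphi_u$), the integral over any interval of length $1$ equals $\int_0^1 \varphi_u^\ast \alpha_0 \, du = \alpha$, giving $\varphi_s^\ast \alpha = \alpha$ for all $s\in\R$.

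No step here is a genuine obstacle; the only subtle point worth double-checking is that $\varphi_t^\ast \alpha_0$ is a \emph{positive} multiple of $\alpha_0$ (rather than merely a nonvanishing one), which is where the hypothesis that $\Sigma$ is cooriented and $\{\varphi_t\}$ is a connected family of contactomorphisms through $\varphi_0=\op{id}$ is used.
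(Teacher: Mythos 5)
Your proof is correct. The paper does not actually spell out an argument here; it simply cites the lemma as a special case of Proposition~2.8 in Lerman's work on contact circle actions, and the proof behind that citation is precisely the averaging construction you give: average a contact form over the circle, check via continuity from $\varphi_0=\op{id}$ that each $\varphi_t^\ast\alpha_0$ is a \emph{positive} multiple of $\alpha_0$ (so the average is again a positive multiple, hence a contact form for the same cooriented $\xi$), and use the group law plus $1$-periodicity to get invariance. So you have supplied the standard proof that the paper outsources to a reference, and your handling of the positivity point is the one genuinely substantive step.
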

\begin{proof}
The lemma is a special case of Proposition~2.8 in \cite{lerman2001contact}.
\end{proof}

\begin{lem}\label{lem:regular}
Let $\varphi_t:\Sigma\to\Sigma$ be a free contact circle action on a (cooriented) contact manifold $\Sigma,$ and let $\alpha$ be a contact form on $\Sigma$ that is invariant under $\varphi_t.$ Denote by $h:\Sigma\to\R$ the contact Hamiltonian of $\varphi_t$ defined with respect to $\alpha.$ Then, 0 is a regular value of the function $h.$
\end{lem}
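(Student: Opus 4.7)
The plan is to combine the invariance of $\alpha$ with the non-degeneracy of $d\alpha$ on $\xi$ to rule out the existence of a critical point of $h$ on the level set $\{h=0\}$. The key identity I would extract first is the relation between $dh$ and $\iota_Y d\alpha$.

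First I would let $Y$ denote the vector field generating $\varphi_t$, so that by the paper's conventions $h = -\alpha(Y)$. Since $\varphi_t^\ast\alpha=\alpha$, the Lie derivative $\mathcal{L}_Y\alpha$ vanishes identically. Cartan's magic formula then gives
\[ 0 = \mathcal{L}_Y\alpha = d(\iota_Y\alpha) + \iota_Y d\alpha = -dh + \iota_Y d\alpha, \]
so $\iota_Y d\alpha = dh$ everywhere on $\Sigma$.

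Next I would argue by contradiction: suppose $p\in\Sigma$ satisfies $h(p)=0$ and $dh_p=0$. The condition $h(p)=0$ reads $\alpha_p(Y_p)=0$, i.e. $Y_p\in\xi_p$. The condition $dh_p=0$ combined with the identity above reads $d\alpha_p(Y_p, \cdot)\equiv 0$ on $T_p\Sigma$, in particular on $\xi_p$. But the restriction $d\alpha_p|_{\xi_p}$ is non-degenerate (this is precisely the contact condition on $\alpha$), so a vector in $\xi_p$ on which $\iota_{(\cdot)} d\alpha_p|_{\xi_p}$ vanishes must itself be zero. Hence $Y_p=0$, contradicting the freeness of the action $\varphi_t$, which forces $Y$ to be nowhere-vanishing.

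There is no real obstacle here; the proof is a two-line application of Cartan's formula plus the symplectic non-degeneracy of $d\alpha|_\xi$. The only subtlety worth flagging is the sign convention $h=-\alpha(Y)$ from Section~2, which controls the sign in the identity $\iota_Y d\alpha = dh$; once that is fixed, the argument is automatic and uses the freeness of the action exactly at the final step.
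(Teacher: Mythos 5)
Your proof is correct and follows essentially the same route as the paper's: Cartan's formula applied to the $\varphi_t$-invariance of $\alpha$ yields $dh=\iota_Y d\alpha$, and then the non-degeneracy of $d\alpha|_\xi$ together with freeness of the action rules out a critical point on $\{h=0\}$. Your final contradiction step is if anything phrased a bit more precisely than the paper's, which describes $dh(p)$ as a ``non-degenerate 1-form'' where it means ``nonzero.''
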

\begin{proof}
Let $Y$ be the vector field on $\Sigma$ that generates $\varphi_t,$ i.e. $\partial_t\varphi_t=Y\circ\varphi_t.$ Then, by definition, the generating contact Hamiltonian is $h=-\alpha(Y).$ The Cartan formula (together with the invariance of $\alpha$ under $\varphi_t$) implies
\[0= \frac{d}{dt}\left(\varphi^\ast_t\alpha\right)= \varphi^\ast_t\left( d\alpha(Y,\cdot) + d(\alpha(Y)) \right).\]
Hence, $dh=d\alpha(Y,\cdot).$ If $p\in h^{-1}(0),$ then $Y(p)\in\ker\alpha_p$ is a non-zero vector that belongs to the contact distribution (it is non-zero because the circle action $\varphi_t$ is a free action). Since $d\alpha$ is non-degenerate when restricted to the contact distribution, the 1-form $dh(p)=d\alpha(Y(p),\cdot)$ is non-degenerate. Therefore, $p\in h^{-1}(0)$ cannot be a critical point of $h,$ i.e. 0 is a regular value of $h.$   
\end{proof}

\begin{rem}
In the situation of Definition~\ref{def:topsym}, if 0 is a regular value of $h$, \cite[Theorem~3.43]{allen2002cambridge} implies that by replacing $\Sigma^+$ by $\Sigma^-$ in the definition one obtains an equivalent definition. More precisely, the contact circle action $\varphi$ is topologically symmetric with respect to $W$ if, and only if, there exists $m\in\mathbb{Z}$ such that
\[\dim H_{m-k}(W,\Sigma^-;\mathbb{Z}_2)= \dim H_{k}(W,\Sigma^-;\mathbb{Z}_2),\]
for all $k\in\mathbb{Z}.$
\end{rem}

\section{Topologically asymmetric contact circle actions and the topology of transformation groups}\label{sec:app}

Section~{\ref{sec:les}} discussed a method of constructing a symplectomorphism $\phi:W\to W$ of a Liouville domain from a loop of contactomorphisms $\varphi_t:\partial W\to\partial W$ of its boundary. If $h_t:\partial W\to\R$ is the contact Hamiltonian generating $\varphi_t$, then the symplectomorphism $\phi$ is obtained as the time-1 map of a Hamiltonian $H_t:W\to\R$ that is equal to $r\cdot h_t$ on the cylindrical end. The method gives rise to a homomorphism
\begin{align*}
    &\Theta \quad:\quad \pi_0\op{Cont}(\partial W)\quad\to\quad \pi_0 \op{Sump}_c W\quad :\quad [\varphi]=[\varphi^h]\mapsto [\phi^H_1]=[\phi].
\end{align*}
This section proves the main result of the paper: topologically asymmetric free contact circle actions furnish (via $\Theta$) non-trivial elements of $\pi_0 \op{Symp}_c W$. The next lemma will be used to reduce to the case where the free contact circle action preserves not only the contact distribution but also the contact form on $\partial W$.

\begin{lem}\label{lem:changealpha}
Let $(W,\lambda)$ be a Liouville domain, and let $\beta$ be a contact form on $\Sigma:=\partial W.$ Then, there exists a Liouville form $\mu$ on $W$ such that $\beta=\left. \mu \right|_\Sigma,$ and such that the following holds. If
\begin{align*}
    &\Theta^\lambda: \pi_1\op{Cont}\Sigma\to \pi_0\op{Symp}_c (W,d\lambda),\\
    &\Theta^\mu : \pi_1\op{Cont}\Sigma\to \pi_0\op{Symp}_c (W,d\mu)
\end{align*}
are the homomorphisms from Section~\ref{sec:les}, and if $\eta\in\pi_1\op{Cont}\Sigma,$ then $\Theta^\lambda(\eta)=0$ if and only if $\Theta^\mu(\eta)=0.$ 
\end{lem}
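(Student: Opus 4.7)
The plan is to construct $\mu$ by exhibiting $(W,\mu)$ as a pull-back of a Liouville subdomain of the completion $\hat W$ whose induced boundary contact form is exactly $\beta$, and then to transfer isotopies via the resulting diffeomorphism.

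Write $\beta=f\alpha$ where $\alpha=\left.\lambda\right|_\Sigma$ and $f:\Sigma\to\R^+$ is smooth. After harmlessly rescaling $\lambda$ by a positive constant we may assume $f>1$ pointwise. Inside $(\hat W,\hat\lambda)$, the region
\[ W':=W\cup\{(x,r)\in\Sigma\times[1,\infty)\ :\ r\leq f(x)\} \]
is a compact Liouville subdomain whose boundary $\{(x,f(x))\}$, under the identification $(x,f(x))\leftrightarrow x$, carries the induced contact form $\hat\lambda|_{\partial W'}=f\alpha=\beta$. Choose a diffeomorphism $\psi:W\to W'$ equal to $x\mapsto(x,f(x))$ on $\partial W$ and given in a collar $\iota:\Sigma\times(1-\varepsilon,1]\to W$ by $\psi\circ\iota(x,r)=(x,rf(x))$. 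Set $\mu:=\psi^*(\hat\lambda|_{W'})$; by construction $\mu$ is a Liouville form on $W$ with $\left.\mu\right|_{\partial W}=\beta$.

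The prescribed form of $\psi$ near $\partial W$ lets us extend $\psi$ to a symplectomorphism $\Psi:(\hat W_\mu,d\mu)\to(\hat W,d\hat\lambda)$ of completions, where on the cylindrical end of $\hat W_\mu$ the map is $\Psi(x,s)=(x,sf(x))$. Given a loop $\eta=[\varphi]\in\pi_1\op{Cont}\Sigma$ generated with respect to $\alpha$ by the contact Hamiltonian $h^\alpha_t$ (hence with respect to $\beta$ by $h^\beta_t=fh^\alpha_t$), and a Hamiltonian $H^\lambda$ on $\hat W$ realizing $\Theta^\lambda(\eta)$, a direct computation on the cylindrical end gives $(\Psi^*H^\lambda)(x,s)=sf(x)h^\alpha_t(x)=sh^\beta_t(x)$. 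Thus $H^\mu:=\Psi^*H^\lambda$ is a legitimate choice for computing $\Theta^\mu(\eta)$, and $\phi^{H^\mu}_1=\Psi^{-1}\circ\phi^{H^\lambda}_1\circ\Psi$.

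Finally I transfer isotopies along $\Psi$. If $\Theta^\lambda(\eta)=0$ is witnessed by an isotopy $\{\phi_s\}$ in $\op{Symp}_c(W,d\lambda)$ (supported in $\op{int}(W)$), then $\{\Psi^{-1}\phi_s\Psi\}$ is supported in $\psi^{-1}(\op{int}(W))\subset\op{int}(W)\subset\hat W_\mu$ and lies in $\op{Symp}_c(W,d\mu)$, yielding $\Theta^\mu(\eta)=0$. Conversely, conjugating a witnessing isotopy of $\Theta^\mu(\eta)=0$ by $\Psi$ produces an isotopy in $\op{Symp}_c(W',d\hat\lambda|_{W'})$ (supported in $\op{int}(W')\subset\hat W$) from $\phi^{H^\lambda}_1$ to $\op{id}$; to conclude $\Theta^\lambda(\eta)=0$ in $\pi_0\op{Symp}_c(W,d\lambda)$ I invoke the Biran--Giroux homotopy equivalence $\op{Symp}_c(W,d\lambda)\simeq\op{Symp}_c(W',d\hat\lambda|_{W'})$ (both being models for compactly supported symplectomorphisms of the common completion $\hat W$, cf.\ Section~\ref{sec:les}). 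The main subtlety is precisely this asymmetry: because $W\subsetneq W'$, the push-forward of an isotopy of $(\hat W_\mu,d\mu)$ under $\Psi$ naturally lands in the larger group $\op{Symp}_c(W',d\hat\lambda|_{W'})$, and invoking Biran--Giroux is what allows us to shrink back to $\op{Symp}_c(W,d\lambda)$.
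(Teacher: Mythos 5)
Your construction of $\mu$ is the same as the paper's: write $\beta=f\alpha$, take the Liouville subdomain of $\hat W$ bounded by the graph of $f$ (your $W'$, the paper's $V$), identify it with $W$ via a Liouville-flow-type diffeomorphism $\psi$, and pull back $\hat\lambda$. The real difference is in the transfer-of-isotopies step. You normalize $f>1$ so that $W\subsetneq W'$; this makes one direction of the equivalence trivial (conjugating by $\Psi^{-1}$ shrinks the support), and you then dispose of the reverse direction by citing a homotopy equivalence $\op{Symp}_c(W,d\lambda)\simeq\op{Symp}_c(W',d\hat\lambda|_{W'})$. The paper does not normalize $f$, so $V$ and $W$ are in general position; it instead conjugates explicitly by the Liouville flow $\phi^\lambda_c$ for $c\gg 0$ to push the witnessing isotopy into $\op{int}V\cap\op{int}W$ (and concatenates with a short correcting isotopy), which handles both directions symmetrically. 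The substance is the same — both ultimately use that Liouville flow conjugation retracts the support into any fixed Liouville subdomain — but the paper carries out this conjugation by hand rather than packaging it as a quoted equivalence.

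One caveat: the homotopy equivalence you invoke is not literally the statement attributed to Biran--Giroux in Section~\ref{sec:les}, which is the equivalence $\op{Symp}_c(W,\lambda)\hookrightarrow\op{Symp}_c W$ between exact compactly supported symplectomorphisms and all compactly supported ones for a \emph{single} Liouville domain. What you need is that for nested Liouville subdomains $W\subset W'$ of a common completion, the inclusion $\op{Symp}_c(W,d\lambda)\hookrightarrow\op{Symp}_c(W',d\hat\lambda|_{W'})$ is a weak homotopy equivalence. This is true, and the proof is exactly conjugation by the Liouville flow $\phi^\lambda_{-c}$ to produce a homotopy inverse — i.e.\ the argument the paper spells out — but you should either prove it or find a reference that states this version, rather than citing the different statement from Section~\ref{sec:les}. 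Similarly, your phrase that both groups are ``models for compactly supported symplectomorphisms of $\hat W$'' is slightly too strong as stated: a compactly supported symplectomorphism of $\hat W$ need not be supported inside $W$ or $W'$; the correct statement is that both inclusions into the full compactly supported group are homotopy equivalences. With these citations tightened up, the argument is complete.
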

\begin{proof}
Denote $\alpha:=\left.\lambda\right|_{\Sigma}.$ Since $\alpha$ and $\beta$ determine the same (cooriented) contact structure on $\Sigma,$ there exists a positive function $f:\Sigma\to\R^+$ such that $\beta=f\alpha.$ Let $V\subset\hat{W}$ be the complement of the set
\[ \left\{ (x,r)\in\Sigma\times\R^+\subset \hat{W}\:|\: r>f(x) \right\}. \]
Since the Liouville vector field $X_\lambda$ (defined by $\lambda=X_\lambda\lrcorner d\lambda$) is nowhere vanishing in $\Sigma\times\R^+,$ and since it is transverse to both $\partial V$ and $\partial W,$ the manifolds $V$ and $W$ are diffeomorphic. Denote by $\Psi: W\to V$ the diffeomorphism furnished by $X_\lambda.$ For $x\in \partial W,$ $\Psi(x)=(x, f(x)).$ Hence, the one-form $\mu:=\Psi^\ast \lambda$ satisfies $\left.\mu\right|_{\Sigma}=\beta.$

Let $\varphi_t:\Sigma\to \Sigma$ be a loop of contactomorphisms that represents the class $\eta,$ and let $h_t:\Sigma\to\R$ be the contact Hamiltonian with respect to the contact form $\alpha$ associated to $\varphi.$ Then, the contact Hamiltonian of $\varphi$ with respect to $\beta$ is equal to $f\cdot h_t:\Sigma\to \R.$ Let $H_t:\hat{W}\to \R$ be a Hamiltonian that is equal to $r\cdot h_t$ on the complement of $\op{int} V\cap\op{int} W$ (this condition makes sure that the time-1 maps of both $H$ and $H\circ \Psi$ are compactly supported in $\op{int} W$). Then, the Hamiltonian $H_t\circ\Psi:W\to\R$ is equal to $\varrho\cdot (f\cdot h_t)$ near the boundary, where $\varrho$ stands for the cylindrical coordinate of the Liouville domain $(W,\mu).$ This implies
\begin{align*}
    &\Theta^\lambda(\eta)= \left[\phi_1^H\right] \in \pi_0\op{Symp}_c (W,d\lambda),\\
    &\Theta^\mu(\eta) = \left[\phi_1^{H\circ\Psi}\right] = \left[\Psi^{-1}\circ\phi_1^H\circ\Psi\right] \in\pi_0\op{Symp}_c (W,d\mu).
\end{align*}
If $\Theta^\lambda(\eta)=0,$ then there exists a symplectic isotopy $\phi_t: (W,d\lambda)\to (W,d\lambda)$ relative to the boundary from the identity to $\phi_1^H.$ Denote by $\phi^\lambda_t: \hat{W}\to\hat{W}$ the flow of the Liouville vector field $X^\lambda$. For $c\in\R^+$ large enough, the symplectomorphism
\[\tilde{\phi}_t:= \left( \phi^\lambda_c \right)^{-1}\circ\phi_t\circ\phi^\lambda_c\]
is compactly supported in $\op{int} V\cap \op{int} W$ for all $t\in [0,1]$. Additionally,
\[s\mapsto \left( \phi^\lambda_{c\cdot s} \right)^{-1}\circ\phi_1\circ\phi^\lambda_{c\cdot s}, \quad s\in[0,1],\]
is a symplectic isotopy, compactly supported in $\op{int} V\cap \op{int} W$, from $\phi_1=\phi_1^H$ to $\tilde{\phi}_1$. Denote by $\overline{\phi}_t$, $t\in[0,1]$, the isotopy that is obtained by concatenating $\tilde{\phi}$ and the inverse of the isotopy above. Then, $\Psi^{-1}\circ\overline{\phi}_t\circ\Psi$ is a symplectic isotopy in $(W,d\mu)$ relative to the boundary from the identity to \mbox{$\Psi^{-1}\circ\phi_1^H\circ\Psi=\phi_1^{H\circ\Psi}.$} Hence, $\Theta^{\mu}(\eta)=0.$ The other direction can be proven similarly.
\end{proof}

Theorem~{\ref{thm:main}} below is a generalisation of Theorem~{\ref{thm:alternatives}} that was stated in the introduction. It uses Proposition~{\ref{prop:floertosingular}} which is stated here and proved in Section~{\ref{sec:morse}}.

\begin{prop}\label{prop:floertosingular}
Let $W$ be a Liouville domain with the boundary $\Sigma:=\partial W,$ and let $h:\Sigma\to\R$ be a contact Hamiltonian such that $0$ is a regular value of $h,$ and such that $h$ has no periodic orbits of period less than or equal to $\varepsilon,$ for some $\varepsilon>0.$ Denote

\begin{align*}
    & \Sigma^-:=\left\{ x\in \Sigma \:|\: h(x)<0  \right\},\\
    & \Sigma^+:=\left\{ x\in \Sigma \:|\: h(x)>0  \right\}.
\end{align*}
Then,
\begin{align*}
    &HF_k(\varepsilon h)\cong H_{k+n}(W, \Sigma^+;\mathbb{Z}_2),\\
    &HF_k(-\varepsilon h)\cong H_{k+n}(W, \Sigma^-;\mathbb{Z}_2).
\end{align*}

\end{prop}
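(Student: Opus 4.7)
The plan is to reduce $HF_\ast(\pm\varepsilon h)$ to the Morse homology of a carefully chosen extension of $\pm\varepsilon h$ to $\hat W$, and then identify that Morse homology with the relative singular homology in the statement. As a preliminary, I would use Lemma~\ref{lem:changealpha} to replace $\lambda$ (if necessary) so that $h$ is strict with respect to $\alpha:=\lambda|_\Sigma$; this matters because for a strict contact Hamiltonian $g$, the Hamiltonian vector field of the lift $rg$ on the symplectization equals $\varepsilon Y_g$ and is $r$-independent.

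\textbf{Floer data and $1$-periodic orbits.} Extend $\varepsilon h$ to a Hamiltonian $H:\hat W\to\R$ with $H(x,r)=r\varepsilon h(x)$ on a collar $\Sigma\times(1-\delta,\infty)$ and such that $H$ restricts to a $C^2$-small Morse function on the complement. On the cylindrical end, $X_H=\varepsilon Y$ never vanishes, because a common zero of $h$ and $dh|_\xi$ would contradict the regularity of $0$; and its time-one flow has no fixed point since $\varphi^h_t$ has no $\tau$-periodic orbit with $\tau\le\varepsilon$. Hence no $1$-periodic orbits lie on the cylindrical end, and for $\varepsilon$ small the only $1$-periodic orbits are the constants at the critical points of $H$ in $W^\circ$.

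\textbf{From Floer to Morse.} At each Morse critical point $p$, the Conley--Zehnder index of the associated constant orbit equals $n-\mathrm{ind}_{\mathrm{Morse}}(p)$, so its Floer degree is $\mathrm{ind}_{\mathrm{Morse}}(p)-n$; in particular $CF_k(\varepsilon h)$ has one generator per Morse critical point of index $k+n$. The Floer differential collapses to the Morse differential by the classical small-Hamiltonian argument (low-energy Floer cylinders are forced to be $t$-independent, hence gradient trajectories), combined with the generalized no-escape lemma of \cite{merry2019maximum}, whose slope hypothesis is provided exactly by the assumption that $h$ has no orbit of period $\le\varepsilon$, ensuring that all relevant Floer cylinders remain in a compact region of $\hat W$.

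\textbf{Morse to singular homology; the main obstacle.} A direct computation in symplectization coordinates shows that the $\partial_r$-component of $\nabla H$ near $\partial W$ equals $\varepsilon h$, which is positive on $\Sigma^+$ and negative on $\Sigma^-$. Consequently, the Morse complex of $H$ on the manifold-with-boundary $W$ should compute $H_{\ast+n}(W,\Sigma^+;\mathbb{Z}_2)$, which delivers the first isomorphism. The precise statement and proof of this Morse-theoretic identification is the content of Section~\ref{sec:morse} and constitutes the main obstacle: the boundary $\partial W$ is not a level set of $H$, it is split by the regular level set $h^{-1}(0)$ into two pieces on which $\nabla H$ points in opposite normal directions, and one must track the degree shift carefully. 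The companion isomorphism $HF_k(-\varepsilon h)\cong H_{k+n}(W,\Sigma^-;\mathbb{Z}_2)$ then follows by running the same argument with $-h$ in place of $h$, using that the positive region of $-h$ is $\Sigma^-$.
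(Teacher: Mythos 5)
Your outline correctly identifies the shape of the argument (Floer $\to$ Morse $\to$ singular homology), and your preliminary observations are sound: the lift $r\varepsilon h$ has no $1$-periodic orbits on the cylindrical end, the $C^2$-small chain-level identification with the Morse complex gives the degree shift by $n$, and the normal component of $\nabla H$ along $\partial W$ is controlled by the sign of $h$. The gap is the step you label ``the main obstacle'' and defer to Section~\ref{sec:morse}: you assert that the Morse complex of $H$ on the manifold-with-boundary $W$ ``should compute'' $H_{\ast+n}(W,\Sigma^+;\mathbb{Z}_2)$, but you give no argument, and the standard references do not deliver this. The Morse theory for manifolds with boundary that you are implicitly invoking (Kronheimer--Mrowka, Laudenbach) is developed for m-functions, i.e. functions whose restriction to $\partial W$ is itself Morse; here the only hypothesis is that $0$ is a regular value of $h$, so $h|_\Sigma$ may be arbitrarily degenerate away from $h^{-1}(0)$. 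There is also a second, quieter issue you do not address: the Morse homology appearing in Step~1 is a priori computed on the open completion $\hat W$, not on $W$, and one has to argue that the Morse flow lines relevant to the complex stay in a compact region and that the resulting homology is that of a suitable pair with boundary a regular level set.

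The paper resolves both issues at once by a doubling-and-truncating construction that you do not reproduce: extend to $W(R)$ so that on $\partial W(R)$ the critical values of $\varepsilon H$ avoid a window $[-K,K]$ containing all interior critical values, glue two copies $W_A, W_B$ of $W(R)$ along the boundary to form a closed manifold $M$, smooth the function across the gluing region to obtain $F:M\to\R$, and pass to $M_T:=F^{-1}([-K,K])$, whose boundary consists of regular level sets -- this puts one back in the textbook setting where $HM_\ast(F,X)\cong H_\ast(M_T,\partial^-M_T)$. An a priori energy estimate (Step~5) shows that for $R$ large no gradient trajectory between critical points crosses the neck, so $CM_\ast(F,X)$ splits as two copies of $CM_\ast(\varepsilon H, \varepsilon JX_H)$; a Mayer--Vietoris argument then extracts $H_\ast(V_A,U_A)$, and a final explicit homeomorphism (Lemma~\ref{lem:abovethegraph}) identifies $(V_A,U_A)$ with $(W,\Sigma^+)$. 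None of this is routine bookkeeping, and without it your ``should compute'' is exactly the claim of the proposition rather than a proof of it. To close the gap you would need either to carry out this doubling argument (or an equivalent cut-and-paste scheme), or to first perturb $h$ so that $h|_\Sigma$ becomes Morse and then justify both the invariance of $HF_\ast(\varepsilon h)$ and of $H_\ast(W,\Sigma^+)$ under such a perturbation -- itself a non-trivial task that the paper avoids.

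One small additional point: it is cleaner to deduce that $X_H$ is non-vanishing on the end directly from the hypothesis that $h$ has no periodic orbits of period $\le\varepsilon$ (a zero of the contact vector field is a constant orbit of every period), rather than from regularity of $0$, which only rules out zeros on $h^{-1}(0)$.
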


{\begin{theorem}\label{thm:main}
Let $W$ be a Liouville domain such that $c_1(W)=0,$ and let $\varphi_t:\partial W\to \partial W$ be a free contact circle action that is not topologically symmetric with respect to $W.$ Then, $\Theta([\varphi_t])$ is a non-trivial symplectic mapping class in $\pi_0\op{Symp}_c(W).$
\end{theorem}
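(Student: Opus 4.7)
The plan is to prove the contrapositive: assume $\Theta([\varphi])=0$ and deduce that $\varphi$ is topologically symmetric with respect to $W$. The strategy is to manufacture a Hamiltonian loop on $\hat W$ out of the vanishing class, feed it through the naturality isomorphism to shift the slope from $+\varepsilon$ to $-(1-\varepsilon)$, and then use Proposition~\ref{prop:floertosingular} together with Poincar\'e--Lefschetz duality to translate this into the required symmetry of $H_\ast(W,\Sigma^+;\mathbb{Z}_2)$.

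First I would normalize the data. Lemma~\ref{lem:strictcontactomorphisms} yields a contact form on $\partial W$ invariant under $\varphi_t$, and Lemma~\ref{lem:changealpha} lets me modify $\lambda$ so that this invariant form is exactly $\lambda|_{\partial W}$, without disturbing the relevant class in $\pi_0\op{Symp}_cW$. The resulting autonomous strict contact Hamiltonian $h$ has $0$ as a regular value by Lemma~\ref{lem:regular}, and freeness of the $\mathbb{S}^1$-action forces the minimal orbit period of $h$ to equal $1$, so for every $\varepsilon\in(0,1)$ both $\varphi_\varepsilon$ and $\varphi_{1-\varepsilon}$ are fixed-point free. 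Next, choose $H:\hat W\to\R$ equal to $r\cdot h$ on $\partial W\times[1,\infty)$, so that $\Theta([\varphi])=[\phi^H_1]$. The assumption $\Theta([\varphi])=0$ provides a compactly supported Hamiltonian $K_t$ with $\phi^K_1=\phi^H_1$ (exactness of $W$ upgrades the symplectic isotopy to a Hamiltonian one), and
\[ A_t := \phi^H_t\circ(\phi^K_t)^{-1} \]
is a $1$-periodic loop of Hamiltonian diffeomorphisms whose generating Hamiltonian $F$ equals the autonomous $r\cdot h$ on the cylindrical end.

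Now I would run the Floer-theoretic computation. Fix $\varepsilon\in(0,1)$. On the cylindrical end, $F=rh$ and $\varepsilon H=\varepsilon rh$ are autonomous multiples of the same vector field, so their flows commute and
\[ (\phi^F_t)^{-1}\circ\phi^{\varepsilon H}_t = \phi^{(\varepsilon-1)rh}_t; \]
thus the Hamiltonian $\overline F\#(\varepsilon H)$ has slope $-(1-\varepsilon)$ on the cylindrical end. Naturality then yields an isomorphism $\mathcal{N}(F):HF_k(\varepsilon h)\xrightarrow{\cong}HF_{k+c}(-(1-\varepsilon)h)$ for some integer shift $c$ (a genuine integer rather than a class in $\mathbb{Z}_{2N}$ precisely because $c_1(W)=0$). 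Since both $\varepsilon$ and $1-\varepsilon$ are strictly less than the minimal orbit period of $h$, Proposition~\ref{prop:floertosingular} applies to both sides and identifies them with $H_{k+n}(W,\Sigma^+;\mathbb{Z}_2)$ and $H_{k+c+n}(W,\Sigma^-;\mathbb{Z}_2)$ respectively. Finally, Poincar\'e--Lefschetz duality on $W^{2n}$ with boundary decomposition $\overline{\Sigma^+}\cup\overline{\Sigma^-}$ glued along the codimension-one submanifold $h^{-1}(0)$ produces $\dim H_{k+c+n}(W,\Sigma^-;\mathbb{Z}_2)=\dim H_{n-k-c}(W,\Sigma^+;\mathbb{Z}_2)$. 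Combining and reindexing by $j=k+n$ yields
\[ \dim H_j(W,\Sigma^+;\mathbb{Z}_2) = \dim H_{m-j}(W,\Sigma^+;\mathbb{Z}_2) \]
for all $j$, with $m:=2n-c$, which is topological symmetry.

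The main obstacle is less any single step than the careful bookkeeping that makes the chain close up: upgrading the compactly supported symplectic isotopy extracted from $\Theta([\varphi])=0$ to a Hamiltonian isotopy, tracking the grading shift $c$ as an honest integer (where $c_1(W)=0$ is essential so that $HF_\ast$ and the naturality isomorphism are $\mathbb{Z}$-graded), and verifying that the cylindrical-end computation $\overline F\#(\varepsilon H)=-(1-\varepsilon)rh$ lands in the no-fixed-point range $(-1,0)\cup(0,1)$ where both Proposition~\ref{prop:floertosingular} and the definition of $HF_\ast$ apply.
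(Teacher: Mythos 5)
Your argument is correct and follows essentially the same route as the paper: extract a Hamiltonian loop $F=H\#\overline{K}$ from the vanishing of $\Theta([\varphi])$, shift the slope from $\varepsilon$ to $\varepsilon-1$ via the naturality isomorphism $\mathcal{N}(F)$, and combine Proposition~\ref{prop:floertosingular} with Lefschetz duality to get the symmetry. The one imprecision is the parenthetical that exactness of $W$ by itself upgrades the compactly supported symplectic isotopy to a Hamiltonian one (such an isotopy can still carry flux); the paper instead first uses the homotopy equivalence $\op{Symp}_c(W,\lambda)\hookrightarrow\op{Symp}_c(W)$ to obtain an isotopy through \emph{exact} symplectomorphisms, which is then automatically Hamiltonian.
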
}
\begin{proof}
Denote by $\lambda$ the Liouville form on $W,$ and by $h:\partial W\to \R$ the contact Hamiltonian of $\varphi_t$ with respect to the contact form $\alpha:=\left.\lambda\right|_{\partial W}.$ Lemma~{\ref{lem:strictcontactomorphisms}} on page {\pageref{lem:strictcontactomorphisms}} above implies that there exists a contact form $\alpha'$ on $\partial W$ such that $\varphi_t^\ast\alpha'=\alpha'$ for all $t\in \R$. By this lemma and Lemma~\ref{lem:changealpha}, without loss of generality, we may assume that $\varphi_t$ preserves the contact form $\alpha$ for all $t.$ 

Assume, by contradiction, that $\Theta([\varphi_t])$ is a trivial symplectic mapping class in $\pi_0\op{Symp}_c(W).$ As explained in Section~\ref{sec:les}, $\Theta([\varphi_t])$ is represented by the time-one map, $\phi^H_1,$ of a Hamiltonian $H:\hat{W}\to \R$ that is equal to $r\cdot h$ on the cylindrical end (the Hamiltonian $H$ can be chosen to be autonomous). The inclusion
\[ \op{Symp}_c(W,\lambda)\hookrightarrow \op{Symp}_c(W)\]
is a homotopy equivalence \cite[Lemma~3.3]{uljarevic2017floer}, and $\phi^H_1$ is an exact symplectomorphism that is isotopic to the identity through symplectomorphisms relative to the cylindrical end. Therefore, $\phi^H_1$ is isotopic to the identity through \emph{exact} symplectomorphisms relative to the cylindrical end. Since every isotopy through exact symplectomorphisms is actually a Hamiltonian isotopy, there exists a Hamiltonian $G_t: \hat{W}\to \R$ that is equal to 0 on the cylindrical end, and such that $\phi^G_1=\phi^H_1.$ 

Denote $F:=H\#\overline{G}.$ Let $\varepsilon\in (0,1).$ The Hamiltonian $F$ is equal to $r\cdot h$ on the cylindrical end. This, together with $h$ being a strict contact Hamiltonian, implies
\[\phi^F_t(x,r)= \left(\varphi_t^h(x), r\right),\]
for $r$ big enough. In particular,
\[r\circ \phi_t^F=r,\]
where $r$ is seen as the coordinate function $r: (x,r)\mapsto r$. Since $h$ is autonomous and strict, $h\circ\varphi_t^h=h$ for all $t\in \R$. Hence, the Hamiltonian
\begin{align*}
    \left(\overline{F}\#(\varepsilon\cdot H)\right)_t&=\overline{F}_t + (\varepsilon H_t)\circ \left( \phi^{\overline{F}}_t \right)^{-1}\\
    &= - F_t\circ \phi^F_t + (\varepsilon H_t)\circ\phi^F_t\\
    &= (\varepsilon H_t - F_t)\circ \phi_t^F
\end{align*}
is equal to 
\[(\varepsilon h - h)\cdot r= (\varepsilon -1)\cdot h\cdot r\]
on the cylindrical end.
The naturality isomorphism
\[\mathcal{N}(F): HF(\varepsilon\cdot h)\to HF((\varepsilon-1)\cdot h)\]
is well defined because $F$ generates a loop of Hamiltonian diffeomorphisms. Hence, if $c\in\mathbb{Z}$ denotes the shift in grading,
\[ \dim HF_{k}(\varepsilon\cdot h) =\dim HF_{k+c}((\varepsilon-1)\cdot h), \]
for all $k\in\mathbb{Z}.$ Since the contact Hamiltonian $h$ has no orbits of the period in $(0,\varepsilon)\cup (0, 1-\varepsilon),$ Proposition~\ref{prop:floertosingular} on page~{\pageref{prop:floertosingular}} below implies
\begin{align*}
    & \dim HF_{k}(\varepsilon\cdot h) = \dim H_{k+n}(W,\Sigma^+;\mathbb{Z}_2),\\
    & \dim HF_{k+c}((\varepsilon-1)\cdot h)= \dim H_{k+c+n}(W,\Sigma^-;\mathbb{Z}_2),
\end{align*}
for all $k\in\mathbb{Z}.$ Here,
\begin{align*}
    &\Sigma^+:=\left\{ p\in\Sigma\:|\: h(p)>0 \right\},\\
    &\Sigma^-:=\left\{ p\in\Sigma\:|\: h(p)<0 \right\}.
\end{align*}
Therefore, 
\[\dim H_{k}(W,\Sigma^+;\mathbb{Z}_2)= \dim H_{k+c}(W,\Sigma^-;\mathbb{Z}_2),\]
for all $k\in\mathbb{Z}.$ A generalization of the Lefschetz duality (Theorem 3.43. in \cite{allen2002cambridge}) implies
\[ H_k(W,\Sigma^-;\mathbb{Z}_2)\cong H^{2n-k}(W,\Sigma^+;\mathbb{Z}_2), \]
for all $k\in\mathbb{Z}.$ Consequently,
\begin{align*}
    \dim H_{k}(W,\Sigma^+;\mathbb{Z}_2)&= \dim H_{k+c}(W,\Sigma^-;\mathbb{Z}_2)\\
    &=\dim H^{2n-c-k}(W,\Sigma^+;\mathbb{Z}_2)\\
    &=\dim\op{Hom}\left( H_{2n-c-k}(W,\Sigma^+;\mathbb{Z}_2),\mathbb{Z}_2 \right)\\
    &=\dim H_{2n-c-k}(W,\Sigma^+;\mathbb{Z}_2),
\end{align*}
for all $k\in\mathbb{Z}.$ This contradicts the assumption that $\varphi$ is not topologically symmetric.
\end{proof}

\begin{cor}\label{cor:fundgroup}
Let $W$ be a Liouville domain such that $c_1(W)=0,$ and let $\varphi_t:\partial W\to \partial W$ be a free contact circle action that is not topologically symmetric with respect to $W.$ Then, $\varphi_t$ determines a non-trivial element $[\varphi_t]\in\pi_1\op{Cont}(\partial W).$ In other words, the loop of contactomorphisms $\{\varphi_t\}$ is not contractible.
\end{cor}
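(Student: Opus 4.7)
The plan is to deduce the corollary from Theorem~\ref{thm:main} by a simple contrapositive argument using the homomorphism property of $\Theta$. Recall from Section~\ref{sec:les} that $\Theta$ arises as the connecting map
\[\Theta \: : \: \pi_1\op{Cont}(\partial W)\to \pi_0\op{Symp}_c(W,\lambda)\cong \pi_0\op{Symp}_c W\]
in the homotopy long exact sequence of the Serre fibration $\op{Symp}(W,\lambda)\to\op{Cont}(\partial W).$ In particular $\Theta$ is a well-defined map of pointed sets, and the trivial element $1\in\pi_1\op{Cont}(\partial W)$ (represented by the constant loop at the identity) is sent to the trivial symplectic mapping class.

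First, I would check that the hypotheses of Theorem~\ref{thm:main} are in force: $W$ is a Liouville domain with $c_1(W)=0$, and $\varphi_t$ is a free contact circle action on $\partial W$ that is not topologically symmetric with respect to $W$. Theorem~\ref{thm:main} then yields
\[\Theta([\varphi_t])\neq 0\quad\text{in}\quad \pi_0\op{Symp}_c W.\]
Suppose for contradiction that $[\varphi_t]$ is the trivial element of $\pi_1\op{Cont}(\partial W),$ i.e.\ the loop $\{\varphi_t\}$ is contractible through loops of contactomorphisms based at the identity. Then $\Theta([\varphi_t])=\Theta(1)=0,$ contradicting the conclusion of Theorem~\ref{thm:main}. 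Hence $[\varphi_t]\neq 1$ in $\pi_1\op{Cont}(\partial W),$ which is exactly what the corollary claims.

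The only point worth a remark is that $\Theta$ is indeed defined on the level of homotopy classes of loops (so that the symbol $\Theta([\varphi_t])$ used throughout the section makes sense), which follows directly from the discussion in Section~\ref{sec:les}: different choices of generating contact Hamiltonian $h_t$ and different choices of extension $H_t$ (equal to $r\cdot h_t$ on the cylindrical end) yield Hamiltonian time-one maps lying in the same class in $\pi_0\op{Symp}_c W$, and homotopic loops produce isotopic such maps. There is no real obstacle in the argument; the corollary is essentially the statement that a non-trivial image under a homomorphism forces a non-trivial source.
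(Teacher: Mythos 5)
Your proof is correct and matches the paper's argument: both deduce the corollary from Theorem~\ref{thm:main} by observing that $\Theta$ sends the trivial class to the trivial class, so $\Theta([\varphi_t])\neq 0$ forces $[\varphi_t]\neq 1$. The extra remarks you add about well-definedness of $\Theta$ are sound but not needed, as the paper already establishes this in Section~\ref{sec:les}.
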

\begin{proof}
Since $\Theta$ is a group homomorphism, the triviality of 
\[[ \varphi_t] \in \pi_1 \op{Cont}(\partial W)\]
implies the triviality of $\Theta\left([ \varphi_t]\right) \in \pi_0 \op{Symp}_c(W)$, which contradicts Theorem~\ref{thm:main}.
\end{proof}

\begin{rem}
Theorem~\ref{thm:main} and Corollary~\ref{cor:fundgroup} hold also in the case where $c_1(W)\not=0.$ However, one should then understand the notion of topological symmetry in the following way. Let $N$ be the minimal Chern number of $W,$ and let $\varphi_t:\partial W\to\partial W$ be a free contact circle action with the positive region $\Sigma^+\subset \partial W.$ Denote by $a_j,$ $j\in\mathbb{Z}_{2N}$ the number
\[ a_j:= \sum_{k\equiv j\:(\op{mod} 2N)} \dim H_k(W,\Sigma^+;\mathbb{Z}_2).\]
The contact circle action $\varphi$ is topologically symmetric if there exists $m\in\mathbb{Z}_{2N}$ such that $a_k=a_{m-k},$ for all $k\in\mathbb{Z}_{2N}.$ Proofs of Theorem~{\ref{thm:main}} and Corollary~{\ref{cor:fundgroup}} in the case where $c_1(W)\not=0$ are the same as in the case where $c_1(W)=0$ except at one point, which is discussed next. Let $(H,J)$ be a regular Floer data such that $H:W\to \R$ is a $C^2$ small Morse function. If $c_1(W)\not=0$, the chain complexes $CF_\ast(H,J)$ and $CM_{\ast+n}(H, JX_H)$ are not identical. Namely, the chain complex $HF_\ast(H,J)$ is $\mathbb{Z}_{2N}$-graded whereas $CM_{\ast+n}(H, J X_H)$ is $\mathbb{Z}$-graded. Instead, $HF_\ast(H,J)$ coincides with the $\mathbb{Z}_{2N}$-graded chain complex obtained by ``rolling up'' $CM_{\ast+n}(H, JX_H)$ modulo $2N$. More precisely,
\[CF_j(H,J)=\bigoplus_{k\equiv j\:(\op{mod} 2N)} CM_{k+n}(H, JX_H).\]
Hence, the number $a_j$ above is actually the dimension of the group $HF_{j-n}(\varepsilon\cdot h)$, where $h$ is the contact Hamiltonian of $\varphi_t$ and where $\varepsilon>0$ is a sufficiently small positive number.
\end{rem}

Let us now explain how to derive Theorem~{\ref{thm:alternatives}} from Theorem~{\ref{thm:main}}.

\begin{lem}\label{lem:topsymgen2s}
Let $\varphi_t:\mathbb{S}^{2n-1}\to\mathbb{S}^{2n-1}$ be a free contact circle action on the standard sphere, let $\alpha$ be a contact form on $\mathbb{S}^{2n-1}.$ Denote by $Y$ the vector field of $\varphi_t,$ and by $P$ the set
\[P:=\left\{ p\in\mathbb{S}^{2n-1}\:|\: \alpha(Y(p))>0 \right\}.\]
Then, $\varphi_t$ is topologically symmetric with respect to the standard symplectic ball, $\mathbb{B}^{2n}$, if and only if, there exists $m\in\mathbb{Z}$ such that
\[\dim \tilde{H}_{m-k}(P;\mathbb{Z}_2)=\dim \tilde{H}_{k}(P;\mathbb{Z}_2)\]
for all $k\in\mathbb{Z}.$ Here, $\tilde{H}$ stands for the reduced singular homology.
\end{lem}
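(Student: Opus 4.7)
The plan is to unravel the definitions and reduce the claim to the long exact sequence of the pair $(\mathbb{B}^{2n},P)$ together with the contractibility of $\mathbb{B}^{2n}$.

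First, I would observe that the set $P$ is independent of the choice of contact form. Indeed, if $\alpha'$ is any other contact form representing the same cooriented contact structure, then $\alpha'=f\alpha$ for some positive function $f$, so $\alpha'(Y)$ and $\alpha(Y)$ have the same sign at every point. With the sign convention $h=-\alpha(Y)$, this set equals $\Sigma^-=\{h<0\}$, the negative region of $\varphi$.

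Next, I would arrange that $0$ is a regular value of the generating contact Hamiltonian. By Lemma~\ref{lem:strictcontactomorphisms}, one can find a contact form $\alpha'$ with $\varphi_t^\ast\alpha'=\alpha'$; then by Lemma~\ref{lem:regular}, $0$ is a regular value of the corresponding $h'=-\alpha'(Y)$. Since the positive (resp.\ negative) region is independent of the contact form, the remark following Lemma~\ref{lem:regular} applies: the contact circle action $\varphi$ is topologically symmetric with respect to $\mathbb{B}^{2n}$ if, and only if, there exists $m\in\mathbb{Z}$ such that
\[\dim H_{m-k}(\mathbb{B}^{2n},P;\mathbb{Z}_2)=\dim H_{k}(\mathbb{B}^{2n},P;\mathbb{Z}_2)\]
for all $k\in\mathbb{Z}$, since $P=\Sigma^-$.

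Finally, I would invoke the long exact sequence of the pair $(\mathbb{B}^{2n},P)$ in $\mathbb{Z}_2$ coefficients. Since $\mathbb{B}^{2n}$ is contractible, $\tilde{H}_\ast(\mathbb{B}^{2n};\mathbb{Z}_2)=0$, which forces the connecting homomorphism to be an isomorphism, giving
\[H_k(\mathbb{B}^{2n},P;\mathbb{Z}_2)\cong \tilde{H}_{k-1}(P;\mathbb{Z}_2)\]
for every $k$. Substituting this into the symmetry condition and reindexing by $j=k-1$, $m'=m-2$, the condition becomes $\dim\tilde{H}_{m'-j}(P;\mathbb{Z}_2)=\dim\tilde{H}_{j}(P;\mathbb{Z}_2)$ for all $j$, which is exactly the statement to be proved. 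There is no real obstacle here beyond careful bookkeeping of the sign convention for $h$ versus $\alpha(Y)$ (ensuring $P=\Sigma^-$, not $\Sigma^+$) and the degree shift introduced by the connecting homomorphism.
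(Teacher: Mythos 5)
Your proposal is correct and follows essentially the same route as the paper's proof: identify $P$ with the negative region $\Sigma^-$, invoke the Lefschetz-duality remark (after Lemma~\ref{lem:regular}) to rephrase topological symmetry in terms of $H_*(\mathbb{B}^{2n},P)$, and then use the long exact sequence of the pair together with the contractibility of $\mathbb{B}^{2n}$ to pass to $\tilde{H}_{*-1}(P)$. You are in fact a bit more explicit than the paper in verifying the hypothesis of that remark --- namely that one can find a contact form (via Lemma~\ref{lem:strictcontactomorphisms} and Lemma~\ref{lem:regular}) for which $0$ is a regular value of $h$, and that $P$ is unchanged under this replacement --- a point the paper's proof uses silently.
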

\begin{proof}
Since $\mathbb{B}^{2n}$ is contractible, from the long exact sequence for the reduced singular homology of the pair $(\mathbb{B}^{2n}, P)$ we deduce 
\[ \tilde{H}_{k}(P;\mathbb{Z}_2) \cong H_{k+1}(\mathbb{B}^{2n}, P;\mathbb{Z}_2), \] 
for all $k\in\mathbb{Z}$. The contact Hamiltonian that generates $\varphi_t$ is given by $h=-\alpha(Y)$, and therefore, the set $P$ is the negative region of the contact circle action $\varphi$ in the sense of Definition~\ref{def:topsym}. The topological symmetry of $\varphi_t$ with respect to $\mathbb{B}^{2n}$ is equivalent to the existence of $m\in\mathbb{Z}$ such that 
\[ \dim H_{m-k}(\mathbb{B}^{2n}, P; \mathbb{Z}_2) = \dim H_k(\mathbb{B}^{2n}, P; \mathbb{Z}_2),  \] 
for all $k\in\mathbb{Z}$. Using the above mentioned isomorphism, this is further equivalent to 
\[ \dim \tilde{H}_{m-2-(k-1)}(P;\mathbb{Z}_2) = \dim \tilde{H}_{k-1}(P;\mathbb{Z}_2), \] 
for all $k\in\mathbb{Z}$, which finishes the proof.
\end{proof}

\settheoremtag{\ref{thm:alternatives}}
\begin{theorem}
For all $n\in\mathbb{N},$ at least one of the following statements is true.
\begin{enumerate}[A]
    \item There exists an exotic symplectomorphism of $\mathbb{B}^{2n}$ (i.e. there exists a non-trivial element of $\pi_0\op{Symp}_c(\mathbb{B}^{2n})$).
    \item Every free contact circle action on $\mathbb{S}^{2n-1}$ is topologically symmetric.
\end{enumerate}
\end{theorem}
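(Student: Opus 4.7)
The plan is to deduce the theorem as a direct corollary of Theorem~\ref{thm:main} applied to the standard symplectic ball $W=\mathbb{B}^{2n}$, combined with Lemma~\ref{lem:topsymgen2s}. I would argue by contraposition: assume that statement~(B) fails, so that there exists a free contact circle action $\varphi_t:\mathbb{S}^{2n-1}\to\mathbb{S}^{2n-1}$ which is not topologically symmetric in the sense of the introduction. The goal is then to produce an exotic symplectomorphism, i.e., a non-trivial class in $\pi_0\op{Symp}_c(\mathbb{B}^{2n})$, which is exactly what the restated form of the theorem requires for~(A).

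First I would verify that the hypotheses of Theorem~\ref{thm:main} are met by $W=\mathbb{B}^{2n}$. It is a Liouville domain whose boundary is the standard contact $\mathbb{S}^{2n-1}$, and since $T\mathbb{B}^{2n}$ is trivial we have $c_1(\mathbb{B}^{2n})=0.$ Next, I would bridge the two notions of topological symmetry appearing in the paper: the one used in the introduction (phrased via $\dim\tilde{H}_\ast(P;\mathbb{Z}_2)$, with $P$ the set where the generating vector field $Y$ is positively transverse to $\xi$) and the one in Definition~\ref{def:topsym} (phrased via $\dim H_\ast(W,\Sigma^+;\mathbb{Z}_2)$). This equivalence is precisely the content of Lemma~\ref{lem:topsymgen2s}; so the failure of~(B) translates to the existence of a free contact circle action on $\partial\mathbb{B}^{2n}$ that is not topologically symmetric with respect to $\mathbb{B}^{2n}$ in the sense of Definition~\ref{def:topsym}.

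With these two ingredients in place, Theorem~\ref{thm:main} applies directly and yields that $\Theta([\varphi_t])\in\pi_0\op{Symp}_c(\mathbb{B}^{2n})$ is non-trivial, hence an exotic symplectomorphism of $\mathbb{B}^{2n}$. This establishes~(A) and completes the contrapositive argument. There is no genuine obstacle at this stage: all of the analytic and homological work is already contained in Theorem~\ref{thm:main} (which reduces Floer computations on the two sides of the naturality isomorphism to $H_\ast(W,\Sigma^\pm;\mathbb{Z}_2)$ via Proposition~\ref{prop:floertosingular} and then applies Lefschetz duality) and in Lemma~\ref{lem:topsymgen2s}. The only small bookkeeping point is the sign/coorientation convention linking the set $P$ from the introduction with $\Sigma^-$ from Definition~\ref{def:topsym}, which is handled through the identity $h=-\alpha(Y)$ and absorbed by the symmetry $k\leftrightarrow m-k$ in the definition.
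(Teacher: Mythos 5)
Your proposal is correct and matches the paper's own proof: the paper also derives Theorem~\ref{thm:alternatives} as an immediate consequence of Theorem~\ref{thm:main} (applied to $W=\mathbb{B}^{2n}$, for which $c_1=0$) together with Lemma~\ref{lem:topsymgen2s}, which translates the introduction's notion of topological symmetry into that of Definition~\ref{def:topsym}. The contrapositive framing and the bookkeeping remarks you include are exactly the content that Lemma~\ref{lem:topsymgen2s} handles.
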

\begin{proof}
This is a direct consequence of Theorem~\ref{thm:main} and Lemma~{\ref{lem:topsymgen2s}}.
\end{proof}

\section{Morse theory}\label{sec:morse}
In this section, we prove the main technical result that computes the Floer homology for a contact Hamiltonian with sufficiently small absolute value. Using the standard argument, one can reduce the Floer homology to the Morse homology of a function on a manifold with non-empty boundary. The Morse theory for manifolds with boundary has been intensively studied \mbox{\cite{morse1934critical,jankowski1972functions,braess1974morse,kronheimer2007monopoles,laudenbach2011morse}}\nocite{hajduk1981minimal,barannikov1994framed,cornea2003rigidity,hajduk1981minimal,barannikov1994framed,cornea2003rigidity,kronheimer2007monopoles,bloom2012combinatorics,borodzik2016morse,pushkar2019morse}. A Morse function whose restriction to the boundary is also Morse is called an m-function. Given an m-function $f:W\to\R,$ it is known that the critical points of $f$ together with some of the critical points of $\left. f\right|_{\partial W}$ recover the singular homology of $W$ \cite{kronheimer2007monopoles, laudenbach2011morse}. Whether a critical point of $\left. f\right|_{\partial W}$ will be taken into account or not is determined by the direction in which the gradient $\nabla f$ points at that point. Namely, a critical point $p$ of $\left. f\right|_{\partial W}$ will be ignored if, and only if, the gradient $\nabla f(p)$ points outwards.

The results in the literature do not cover entirely the Morse theory required in the proof of Proposition~\ref{prop:floertosingular}. For instance, the proof will deal with Morse functions whose restrictions to the boundary are not necessarily Morse. For the convenience of the reader, we recall the statement of Proposition~{\ref{prop:floertosingular}}.

\begin{taggedtheorem}{\ref{prop:floertosingular}}
Let $W$ be a Liouville domain with the boundary $\Sigma:=\partial W,$ and let $h:\Sigma\to\R$ be a contact Hamiltonian such that $0$ is a regular value of $h,$ and such that $h$ has no periodic orbits of period less than or equal to $\varepsilon,$ for some $\varepsilon>0.$ Denote
\begin{align*}
    & \Sigma^-:=\left\{ x\in \Sigma \:|\: h(x)<0  \right\},\\
    & \Sigma^+:=\left\{ x\in \Sigma \:|\: h(x)>0  \right\}.
\end{align*}
Then,
\begin{align*}
    &HF_k(\varepsilon h)\cong H_{k+n}(W, \Sigma^+;\mathbb{Z}_2),\\
    &HF_k(-\varepsilon h)\cong H_{k+n}(W, \Sigma^-;\mathbb{Z}_2).
\end{align*}
\end{taggedtheorem}
\begin{proof} The proof is divided into several steps. In the first step, we pass from Floer to Morse homology. Steps 2-7 reduce the proof (by pasting and cutting) to the known case where the Morse function is constant on the boundary components. Figure~\ref{fig:bigfig} on page \pageref{fig:bigfig} illustrates the proof. 

Lemma~{\ref{lem:modify}} below states that the groups $HF_\ast(a\cdot h)$ and $HF_\ast(b\cdot h)$ are isomorphic provided that the contact Hamiltonian $h$ has no closed orbits of period in $[a,b]$. Hence, we may assume, without loss of generality, that $\varepsilon>0$ is arbitrary small.

\textbf{Step~1} (Passing to the Morse homology). Let $H:\hat{W}\to\R$ be a Morse function such that $H(x,r)=h(x)\cdot r$ for $(x,r)\in\Sigma\times[1,\infty).$ For $\varepsilon>0$ small enough, there exists an almost complex structure $J$ on $\hat{W}$ such that $(\varepsilon\cdot H, J)$ is Floer data for $\varepsilon\cdot h$ and such that the pair $(\varepsilon\cdot H, \varepsilon\cdot JX_H)$ is Morse-Smale \cite[Chapter~10]{audin2014morse} (note that we are using different conventions from \cite{audin2014morse}, namely we define $X_H$ as the vector field that satisfies $dH=d\lambda(X_H,\cdot)$). Moreover,
\[CF_\ast(\varepsilon\cdot H,J)= CM_{\ast+n}(\varepsilon\cdot H, \varepsilon\cdot JX_H),\]
where $CM_\ast$ stands for the Morse complex. Consequently,
\[\dim HF_k(\varepsilon h)=\dim HM_{k+n}(\varepsilon\cdot H, \varepsilon\cdot JX_H),\]
where $HM_\ast$ is the Morse homology.

\textbf{Step~2} (Extending the Liouville domain). Let 
\begin{align*}
    &L:=\max\{\varepsilon\cdot H(p)\:|\: p\in\op{Crit} H\},\\
    &\ell:=\min\{\varepsilon\cdot H(p)\:|\: p\in\op{Crit} H\}.
\end{align*}
In this step, we prove that there exists $R>0$ such that the extension
\[W(R):=\hat{W}\setminus (\Sigma\times(R,\infty))\]
of the Liouville domain $W$ has the following property: the critical values of $\left.\varepsilon\cdot H\right|_{\partial W(R)}$ do not lie in the interval $[\ell,L].$ Since 0 is a regular value of $\varepsilon\cdot h$ and since the domain of $\varepsilon\cdot h,$ $\Sigma,$ is compact, there exists $\delta>0$ such that $\varepsilon\cdot h$ has no critical values in $(-\delta,\delta).$ A point $p\in\Sigma$ is a critical point of $\varepsilon\cdot h$ if, and only if, $(p,R)\in\partial W(R)=\Sigma\times\{R\}$ is a critical point of $\left.\varepsilon\cdot H\right|_{\partial W(R)}.$ Hence $\left.\varepsilon\cdot H\right|_{\partial W(R)}$ has no critical values in $(-\delta R, \delta R).$ If $R$ is big enough, then $[\ell,L]$$\subset (-\delta R, \delta R).$ See Figure~{\ref{fig:bigfig}}, Step~2 on page~{\pageref{fig:bigfig}}.

\textbf{Step~3} (Constructing the double manifold). In this step, we construct a closed manifold (not necessarily symplectic), $M$, by gluing two copies of $W(R)$ along the boundary. See Figure~{\ref{fig:bigfig}}, Step~3 on page~{\pageref{fig:bigfig}}. Denote by $W_A$ and $W_B$ those two copies. Explicitly,
\[ M:= (W_A\sqcup (\Sigma\times\R)\sqcup W_B)/\sim, \]
where $\sim$ stands for the following identifications
\begin{align*}
    & W_A\quad\supset\quad \Sigma\times(0, R] \to \Sigma\times\R\quad:\quad (x,r)\mapsto \left(x, \log\left( \frac{r}{R} \right) \right),\\
    & W_B\quad\supset\quad \Sigma\times(0, R] \to \Sigma\times\R\quad:\quad (x,r)\mapsto \left(x, -\log\left( \frac{r}{R} \right) \right).
\end{align*}
Let $F:M\to \R$ be a function that is obtained by smoothing out the function on $M$ equal to $\varepsilon\cdot H$ on both $W_A$ and $W_B.$ A formal definition of $F$ follows.
Let $\chi:\R\to$ $\R_{<0}$ be a smooth concave function such that $\chi(s)=s$ for $s\leqslant \log\left( \frac{R-1}{R} \right),$ $\chi(s)=-s$ for $s\geqslant -\log\left( \frac{R-1}{R} \right),$ and such that $\chi$ has a unique maximum at $s=0.$ The function $\chi$ can be seen as a smoothening of the function $s\mapsto -\abs{s}$ by a compactly supported perturbation. Denote by $F:M\to\R$ the function defined by
\[F(p):=\left\{ \begin{tabular}{ll}
     $\varepsilon \cdot H(p)$ & \text{for } $p\in W(R-1)\subset W_A,$  \\
     $R\cdot e^{\chi(s)}\cdot \varepsilon \cdot h(y)$&\text{for } $p=(y,s)\in \Sigma\times\R,$\\
     $\varepsilon \cdot H(p)$ & \text{for } $p\in W(R-1)\subset W_B.$
\end{tabular} \right.\]

\begin{figure}[ht]
    \centering
    \def\svgwidth{\columnwidth}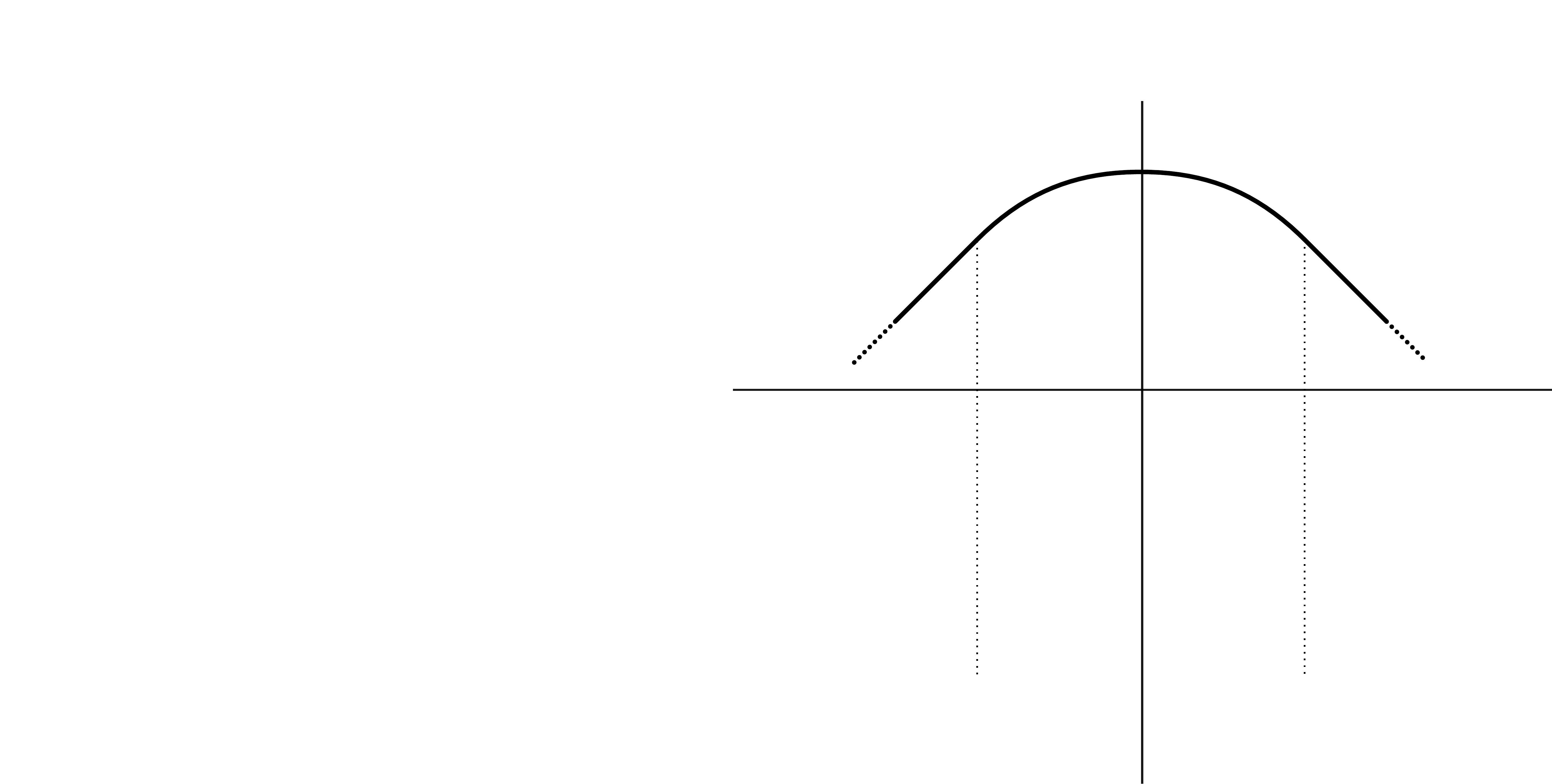
    \caption{An illustration of the functions $\chi$ (left) and $F$ (right). The function $F$ is graphed along $M$ in the case of a constant contact Hamiltonian $h$.} 
\end{figure}

\textbf{Step~4} (Truncated double manifold). There are three types of critical points of the function $F : $ the critical points of $H$ in $W_A,$ the critical points of $H$ in $W_B,$ and the critical points on $\Sigma\times\{0\}\subset \Sigma\times\R$ that correspond to the critical points of $h:\Sigma\to\R.$ As explained in Step~2, for $R>0$ big enough, the critical points of the third type have values outside the interval $[\ell, L].$ In fact, one can choose $R$ big enough so that the critical points of the third type have values outside an interval $[-K,K]\supset [\ell,L]$ where $K$ satisfies
\[ F^{-1}(-K), F^{-1}(K)\subset \Sigma\times\R\subset M. \]
Then, $-K$ and $K$ are regular values of $F$ and the only critical points of $F$ in $F^{-1}([-K, K])$ are the ones of type 1 and type 2. Note that these critical points are nondegenerate. Hence $F^{-1}([-K, K])$ is a manifold with boundary whose boundary components are regular level sets of $F,$ and $F$ is a Morse function on $F^{-1}([-K, K]).$ Denote $M_T:=F^{-1}([-K, K])$ (see Figure~{\ref{fig:bigfig}}, Step~4 on page~{\pageref{fig:bigfig}}).

\textbf{Step~5} (No crossing). Let $g$ be a Riemannian metric on $M$ such that $g=d\lambda(\cdot, J\cdot)$ on both $W(R/2)\subset W_A$ and $W(R/2)\subset W_B.$ Let $X=\nabla F,$ where $\nabla$ is the gradient with respect to $g.$ In particular, if $R>2,$ $X$ is equal to $\varepsilon \cdot J X_H$ on both $W(R/2)\subset W_A$ and $W(R/2)\subset W_B.$ In this step, we prove that for $R$ large enough, there are no integral curves of $X$ that connect two critical points of $F$ in $M_T$ and intersect the submanifold $\Sigma\times\{0\}\subset \Sigma\times\R\subset M.$ This eliminates the integral curves of $X$ that connect a critical point of $F$ in $W_A$ with a critical point of $F$ in $W_B,$ and also the integral curves of $X$ that connect two critical points of $F$ in $W_A$ (or $W_B$) but at some point leave $W_A$ ($W_B,$ respectively). 

Assume there exists an integral curve $\gamma:\R\to M$ of $X$ such that $\gamma$ connects two critical points of $F$ in $M_T$ and such that it is not contained in one of the regions $W_A$ and $W_B.$ Without loss of generality assume $\displaystyle \lim_{t\to-\infty}\gamma(t)\in W_A.$ Then, for $R>2,$ there exists an interval $[a,b]\subset\R$ such that $ \gamma(a)\in\Sigma\times\{1\}\subset W_A$ and $\gamma(b)\in\Sigma\times \{R/2\}\subset W_A.$ Since
\begin{align*}
    L-\ell &\geqslant F(\lim_{t\to+\infty}\gamma(t))- F(\lim_{t\to-\infty}\gamma(t))\\
    &= \int_{-\infty}^{+\infty} dF(\dot{\gamma}(t))dt\\
    &= \int_{-\infty}^{+\infty} g\left(\nabla F(\gamma(t)), \dot{\gamma}(t)\right)dt\\
    &= \int_{-\infty}^{+\infty} g\left(X(\gamma(t)), X(\gamma(t))\right)dt\\
    &\geqslant \int_a^b g(\varepsilon \cdot JX_H(\gamma(t)), \varepsilon \cdot JX_H(\gamma(t)))dt\\
    &= \varepsilon^2\cdot \int_a^b g(X_H(\gamma(t)), X_H(\gamma(t)))dt\\
    &\geqslant \varepsilon^2\cdot(b-a)\cdot \min\left\{ \norm{X_H(p)}^2_g\:|\: p\in \Sigma\times\left[1,\frac{R}{2}\right]\right\},
\end{align*}
we have
\[b-a\leqslant \frac{L-\ell}{\varepsilon^2\cdot\min\left\{ \norm{X_H(p)}^2_g\:|\: p\in \Sigma\times\left[1,\frac{R}{2}\right]\right\}}.\]
Since $h$ is a strict contact Hamiltonian (i.e. it generates an isotopy that preserves the contact form), the vector $X_H(x,r)$ is independent of $r.$ Hence
\[ \min\left\{ \norm{X_H(p)}^2_g\:|\: p\in \Sigma\times\left[1,\frac{R}{2}\right]\right\}= \min_{p\in\Sigma\times\{1\}}\norm{X_H(p)}_g^2, \]
and, consequently, $b-a<C,$ where $C\in\R^+$ is a constant that does not depend on $R.$ For $r\in\left[1,\frac{R}{2}\right]$, the vector field $\partial_r$ is orthogonal (with respect to $g$) to $\Sigma\times\{ r \}$ and $\norm{\partial_r}_g=\frac{1}{\sqrt{r}}.$ The latter follows because
\begin{align*}
    \norm{\partial_r}_g^2&= d\lambda (\partial_r, J\partial_r)\\
    &= dr\wedge\alpha(\partial_r, J\partial_r) + r\cdot d\alpha (\partial_r, J\partial_r)\\
    &=dr(\partial_r) \cdot \alpha(J\partial_r) - \alpha(\partial_r)\cdot dr(J\partial_r)\\
    &=\alpha(J\partial_r)\\
    &=\frac{1}{r}\cdot (\lambda\circ J)(\partial_r)\\
    &= \frac{1}{r}\cdot (-dr\circ J\circ J)(\partial_r)\\
    &= \frac{1}{r}.
\end{align*}

The orthogonal projection (with respect to $g$) of the vector $\dot{\gamma}(t)$ to the 1-dimensional vector space
\[\{s\cdot \partial_r\:|\: s\in\R\}\]
is equal to $\frac{d}{dt}\Big(r(\gamma(t))\Big)\cdot \partial_r$. Hence, the Pythagorean theorem implies
\begin{align*}
    \norm{\dot{\gamma}(t)}^2_g &\geqslant \abs{\frac{d}{dt}\bigg(r(\gamma(t))\bigg)}^2\cdot \norm{\partial_r}_g^2\\
    &= \abs{\frac{d}{dt}\bigg(r(\gamma(t))\bigg)}^2 \cdot \frac{1}{r(\gamma(t))}\\
    &= \abs{\frac{d}{dt}\left( 2\sqrt{ r(\gamma(t))} \right)}^2.
\end{align*}

Therefore,
\begin{align*}
    L-\ell &\geqslant \int_a^b \norm{\dot{\gamma}(t)}^2_g dt\\
    &\geqslant\int_a^b \abs{\frac{d}{dt}\bigg(2\sqrt{r(\gamma(t))}\bigg)}^2 dt\\
    &\geqslant \frac{1}{b-a}\cdot \left(\int_a^b \frac{d}{dt}\bigg(2\sqrt{r(\gamma(t))}\bigg)dt\right)^2\\
    &\geqslant \frac{1}{b-a}\cdot\left( 2\sqrt{r(\gamma(b))}-2\sqrt{r(\gamma(a))}\right)^2\\
    &\geqslant \frac{1}{C}\cdot \left(\sqrt{2R}-2\right)^2.
\end{align*}

This, however, cannot be possible for $R$ big enough. In other words, for $R$ big enough, there are no integral curves of $X$ that cross $\Sigma\times\{0\}\subset \Sigma\times\R\subset M$ and connect critical points of $F$ in $M_T.$ In fact, we proved that such integral curves are all contained in either $W(R/2)\subset W_A$ or $W(R/2)\subset W_B.$ As a consequence, $F$ and $X$ satisfy the Smale condition in $M_T,$ and
\[ CM_\ast (F, X) = CM_\ast(\varepsilon\cdot H, \varepsilon\cdot JX_H)\oplus CM_\ast(\varepsilon\cdot H, \varepsilon\cdot JX_H). \]
Hence
\[ 2\dim HF_k(\varepsilon\cdot h)=\dim HM_{k+n}(F,X). \]
Denote by $\partial^+M_T$ the part of the boundary $\partial M_T$ on which the vector field $X$ points inwards. Similarly, denote $\partial^-M_T:=\partial M_T\setminus \partial^+ M_T,$ i.e. $\partial^-M_T$ is the part of the boundary $\partial M_T$ on which the vector field $X$ points outwards. Alternatively, $\partial^+M_T$ and $\partial^-M_T$ can be defined by
\[\partial^+M_T:= F^{-1}(-K),\quad \partial^-M_T:=F^{-1}(K).\]
Since the boundary components of the manifold $M_T$ are regular level sets of $F,$ the Morse homology of $F:M_T\to \R$ can be expressed as the singular homology of a pair (see, for instance, \cite[Theorem~3.9]{hutchings2002lecture})
\[HM_\ast(F, X)\cong H_\ast(M_T, \partial^- M_T).\]

\textbf{Step~6} (Mayer-Vietoris long exact sequence). Denote $V_A:= W_A\cap M_T,$ $V_B:=W_B\cap M_T,$ $U_A:= \partial^- M_T\cap W_A,$ and $U_B:=\partial^-M_T\cap W_B.$ The relative form of the Mayer-Vietoris long exact sequence implies that there exists a long exact sequence
\begin{equation*}
\begin{tikzcd}
\vdots \arrow{d} &\arrow[draw=none]{d} \\
H_k(V_A\cap V_B, U_A\cap U_B) \arrow{d} & H_{k-1}(V_A\cap V_B, U_A\cap U_B) \arrow{d}\\
 H_k(V_A, U_A)\oplus H_k(V_B, U_B) \arrow{d}\arrow[draw=none]{r}[name=Z, shape=coordinate]{} & \vdots\\
H_k(M_T, \partial^- M_T)\arrow[rounded corners, to path={--([yshift=-2ex]\tikztostart.south) -| ([xshift=-4ex] Z) |-  ([yshift=2ex]\tikztotarget.north) [near start]\tikztonodes -- (\tikztotarget)}]{ruu}
\end{tikzcd}
\end{equation*}

The function $\left.F\right|_{V_A\cap V_B}$ is a Morse function that has no critical points. Hence $V_A\cap V_B$ is a trivial cobordism, i.e. $V_A\cap V_B\approx (U_A\cap U_B)\times[0,1].$ In particular,
\[ H_k(V_A\cap V_B, U_A\cap U_B)=0 \]
for all $k\in\mathbb{Z}.$ Consequently,
\[H_k(M_T, \partial^- M_T)\cong H_k(V_A, U_A)\oplus H_k(V_B, U_B) \]
for all $k\in\mathbb{Z}.$ Since the pairs $(V_A, U_A)$ and $(V_B, U_B)$ are homeomorphic, the following holds
\[(\forall k\in\mathbb{Z})\quad \dim H_k(M_T, \partial^- M_T)= 2\dim H_k(V_A, U_A). \]
Therefore,
\[(\forall k\in\mathbb{Z})\quad \dim HF_k(\varepsilon h)= \dim H_{k+n}(V_A, U_A).\]
See Figure~{\ref{fig:bigfig}}, Step~6 on page~{\pageref{fig:bigfig}}.

\textbf{Step~7.} In this step, we prove that the pair $(V_A, U_A)$ is homeomorphic to the pair $(W,\Sigma^+).$ The boundary of $V_A$ consists of parts of the submanifolds $\Sigma\times\{0\}\subset \Sigma\times \R\subset M,$ $F^{-1}(-K),$ and $F^{-1}(K).$ These parts belong to $\Sigma\times\R\subset M.$ Recall that, on $\Sigma\times\R\subset M$, the function $F$ is given by
\[(y, s)\mapsto R\cdot e^{\chi(s)}\cdot \varepsilon\cdot h(y).\] The function $F$ does not change the sign along the curve $\{x\}\times\R$, for $x\in\Sigma.$ Therefore, there does not exist $x\in\Sigma$ such that $\{x\}\times\R$ intersects both of the sets
\[(\partial V_A)\cap F^{-1}(-K), \quad (\partial V_A)\cap F^{-1}(K). \] 
 The derivative 
\[\frac{d}{ds} \left( Re^{\chi(s)}\varepsilon h(y) \right)= \varepsilon Re^{\chi(s)} \chi'(s)h(y)\]
is positive for positive $h(y)$ and negative for negative $h(y)$ on $\Sigma\times(-\infty, 0]$. Therefore, the value of $F$ on the curve $\{x\}\times(-\infty,0]$ belongs to $(-K,K)$ if
\[(x,0)\in \op{int}\bigg( V_A \cap (\Sigma\times\{0\}) \bigg) = (\Sigma\times\{0\})\cap F^{-1}(-K,K).\]
Consequently, the curve $\{x\}\times\R$, $x\in \Sigma$, cannot intersect two of the sets
\[(\partial V_A)\cap F^{-1}(-K), \quad (\partial V_A)\cap (\Sigma\times\{0\}),\quad (\partial V_A)\cap F^{-1}(K)\]
except at the points where those two sets intersect themselves.

Since
\[ \frac{d}{ds} \left( Re^{\chi(s)}\varepsilon h(y) \right)= \varepsilon Re^{\chi(s)} \chi'(s)h(y)\not=0  \]
for $s\not=0,$ and since the curve $\{x\}\times\R$ cannot intersect different parts of $\partial V_A$ (except at the points where they intersect) for all $x\in\Sigma,$ the boundary of $V_A$ can be seen as the graph in $\Sigma\times(-\infty,0]$ of a continuous (in fact, a piecewise-smooth) function. Lemma~{\ref{lem:abovethegraph}} on page~{\pageref{lem:abovethegraph}} below states that in this case $V_A= W_A\cap F^{-1}([-K,K])$ is homeomorphic to $W_A$ via a homeomorphism that sends $U_A= F^{-1}(K)\cap W_A$ to $F^{-1}([K,+\infty))\cap \partial W_A.$ The continuous function $f$ to which Lemma~{\ref{lem:abovethegraph}} is applied is given by 
\[f(y):=\min\left\{\left(F^{-1}_y\right)(K), 0\right\},\]
where $F_y:\R\to\R: s\mapsto F(y,s).$
The homeomorphism furnished by Lemma~{\ref{lem:abovethegraph}} maps indeed $U_A$ to $F^{-1}([L, +\infty))\cap \partial W_A$ for the following reason. For all $(y, f(y))\in U_A$, either $f(y)=0$ or $0>f(y)= F^{-1}_y(K)$ (which implies $F(y,0)>K$).

Since there are no critical points of $\left.F\right|_{\partial W_A}$ in $F^{-1}([-K, K]),$ the pair
\[\left(W_A, \partial W_A \cap  F^{-1}([K, +\infty)) \right)\]
is homeomorphic (in fact, diffeomorphic) to the pair $(W_A, \partial W_A\cap \{F\geqslant 0\}).$ Consequently, the pair $(V_A, U_A)$ is homeomorphic to the pair $(W, \{h\geqslant 0\}).$ Hence
\begin{align*}
    H_\ast (V_A, U_A)&\cong H_\ast (W, \{h\geqslant 0\})\\
    &\cong H_\ast (W, \{h> 0\})\\
    &= H_\ast (W, \Sigma^+).
\end{align*}
See Figure~{\ref{fig:bigfig}}, Step~7 on page~{\pageref{fig:bigfig}}.
\end{proof}

\begin{figure}
\caption{An illustration of the proof of Proposition~\ref{prop:floertosingular}}
\label{fig:bigfig}
\vspace{1cm}
\centering
\includegraphics{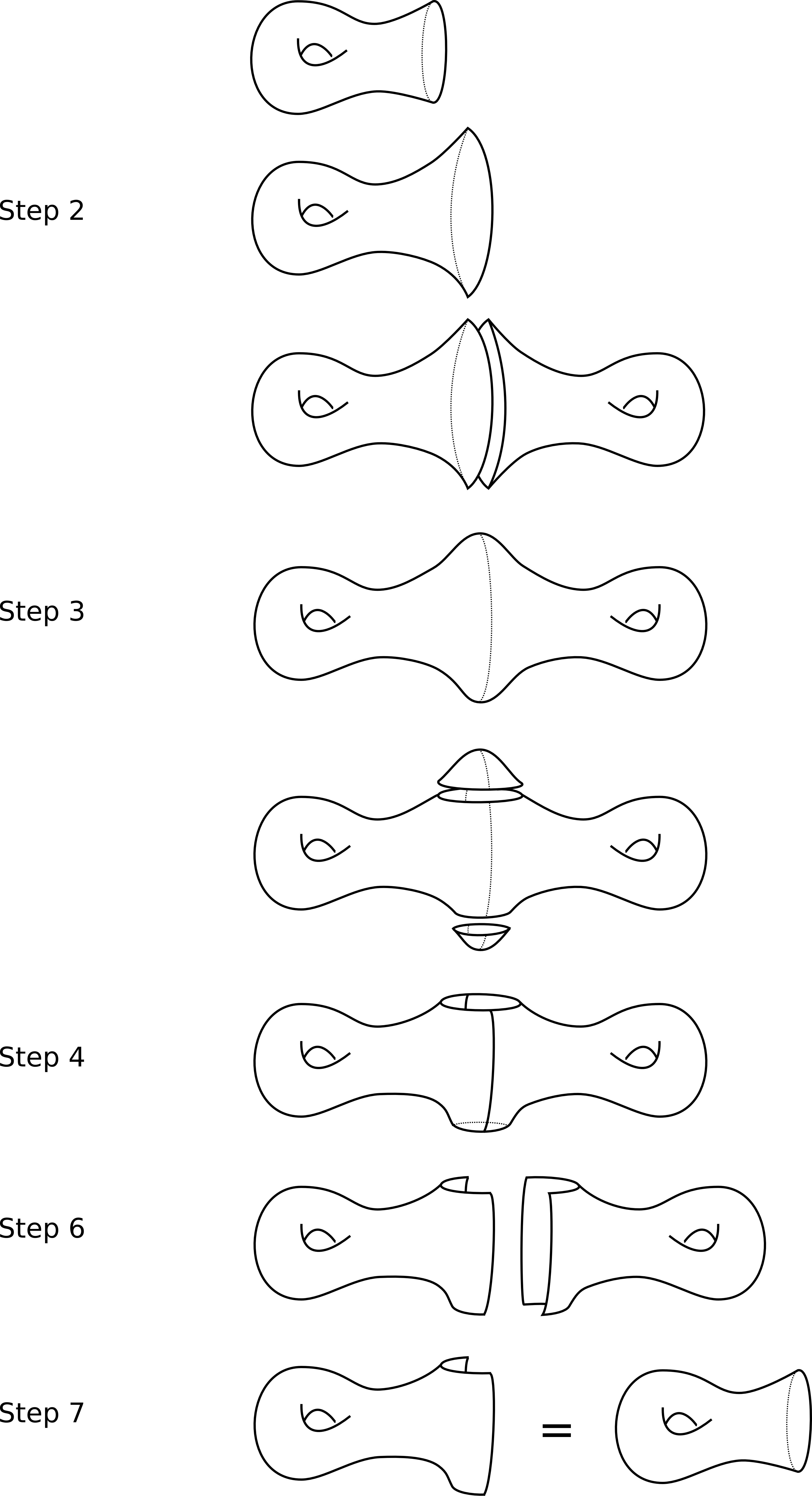}
\end{figure}

The following lemma was used at the beginning of the proof of Proposition~{\ref{prop:floertosingular}}. It allowed passing from Floer to Morse homology.
\begin{lem}\label{lem:modify}
Let $W$ be a Liouville domain with the boundary $\Sigma:=\partial W,$ and let $h:\Sigma\to \R$ be a strict contact Hamiltonian (i.e.  $dh(R^\alpha)=0$ where $R^\alpha$ is the Reeb vector field). Assume that the contact Hamiltonian $c\cdot h$ has no 1-periodic orbits for $c\in[a,b].$ Then, the groups $HF_\ast(a\cdot h)$ and $HF_\ast(b\cdot h)$ are isomorphic.
\end{lem}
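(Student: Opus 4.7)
The hypothesis that $c\cdot h$ has no $1$-periodic orbits for every $c\in[a,b]$ ensures that $HF_\ast(c\cdot h)$ is defined for all such $c$, and my plan is to construct mutually inverse continuation maps between $HF_\ast(a\cdot h)$ and $HF_\ast(b\cdot h)$ by exploiting the generalized no-escape lemma of Merry--Uljarević \cite{merry2019maximum} on which the Floer theory used in this paper is founded.

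Concretely, I pick a Hamiltonian $H:\hat{W}\to\R$ with $H(x,r)=r\cdot h(x)$ on the cylindrical end, and a smooth non-decreasing function $c:\R\to[a,b]$ with $c(s)=a$ for $s\ll 0$ and $c(s)=b$ for $s\gg 0$; together with a generic $s$-dependent compatible family of almost complex structures, the $s$-dependent Hamiltonian $H_s:=c(s)\cdot H$ is the candidate for continuation data from $aH$ to $bH$. The classical monotonicity condition $H^-\leqslant H^+$ on the cylindrical end (recalled in Section~\ref{sec:prel}) can fail here, because $h$ may change sign on $\Sigma$, so the direct continuation argument does not immediately apply. However, at every $s$ the slope contact Hamiltonian $c(s)\cdot h$ has no $1$-periodic orbits by hypothesis, which is precisely the input required by the no-escape lemma; this yields a uniform $C^0$-bound on Floer continuation trajectories converging at $\pm\infty$ to $1$-periodic orbits of $aH$ and $bH$, so the associated moduli spaces are compact and define a chain map $\Phi:CF_\ast(aH,J^-)\to CF_\ast(bH,J^+)$. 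Reversing the interpolation (i.e.\ using a non-increasing $\tilde{c}:\R\to[a,b]$ with $\tilde{c}(s)=b$ for $s\ll 0$ and $\tilde{c}(s)=a$ for $s\gg 0$) produces a continuation map $\Psi:HF_\ast(b\cdot h)\to HF_\ast(a\cdot h)$ by the identical argument.

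To conclude that $\Phi$ and $\Psi$ are mutually inverse, I observe that on homology the composition $\Psi\circ\Phi$ coincides with the continuation map induced by concatenating the two homotopies. The concatenated homotopy has its slope contained in $[a,b]$ throughout, and can be deformed through a $1$-parameter family of homotopies whose slopes are likewise confined to $[a,b]$ to the constant homotopy at $aH$; the no-escape lemma applies uniformly across this family, and the standard parametrized moduli space argument produces a chain homotopy between $\Psi\circ\Phi$ and the identity continuation map, which is the identity on $HF_\ast(a\cdot h)$. A symmetric argument handles $\Phi\circ\Psi$, so $\Phi$ is an isomorphism.

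The main technical obstacle is the breakdown of slope monotonicity, which prevents direct appeal to the continuation machinery as formulated in Section~\ref{sec:prel} and forces us into the more general no-escape framework. Verifying that this framework applies uniformly to the non-monotone interpolation $H_s=c(s)\cdot H$ under the hypothesis of the lemma is the crucial point; once that is established, packaging the resulting continuation maps into a mutually inverse pair is a standard exercise in parametrized Floer theory.
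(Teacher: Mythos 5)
Your approach is genuinely different from the paper's, and the key step does not go through as stated.

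The paper does not construct continuation maps between $HF_\ast(a\cdot h)$ and $HF_\ast(b\cdot h)$ at all. Instead, it constructs a single homotopy $H^s$ of Hamiltonians on the cylindrical end $\Sigma\times[1,\infty)$ that interpolates, \emph{far out on the cylinder}, between slope $a\cdot h$ and slope $b\cdot h$, while keeping the Hamiltonian equal to $a\cdot h\cdot r$ near $r=1$ throughout. The technical heart is the construction of a function $\mu$ with $\mu(r)=ar$ near $r=1$, $\mu(r)=br$ for large $r$, $\mu(r)/r\in[a,b]$, and $\abs{\mu'(r)-\mu(r)/r}$ arbitrarily small; one then checks directly that the Hamiltonian vector field of $H^s$ is a small perturbation of $c\cdot Y^h$ with $c\in[a,b]$, hence has no $1$-periodic orbits on the cylindrical end. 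Because the resulting Floer data for $a\cdot h$ and for $b\cdot h$ agree on $W$ and in a collar of $\partial W$, and because (by the $s$-\emph{independent} no-escape lemma, which needs no monotonicity hypothesis) Floer trajectories never leave that region, the two chain complexes are literally identical. No continuation map is ever built.

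By contrast, you propose to build continuation maps for the $s$-dependent Hamiltonian $H_s=c(s)\cdot H$ with $c$ increasing. You correctly observe that the monotonicity requirement $\partial_s h_{s,t}\geqslant 0$ on the cylindrical end fails when $h$ changes sign, and this is exactly the obstruction that makes the lemma nontrivial. The gap in your argument is the claim that the generalized no-escape lemma applies to this non-monotone homotopy merely because $c(s)\cdot h$ has no $1$-periodic orbits for each fixed $s$. The continuation-map machinery recalled in Section~\ref{sec:prel} of the paper (following \cite{merry2019maximum}) explicitly requires the slope $h_{s,t}$ to be non-decreasing in $s$; the generalized no-escape lemma is what removes positivity assumptions on $h$ itself, not the monotonicity assumption on the homotopy. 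Absence of $1$-periodic orbits at each $s$ is not, on its own, a hypothesis under which an a~priori $C^0$-bound on continuation trajectories has been established, and the energy identity for continuation solutions acquires a term $\int \partial_s H_s(u)$ of indefinite sign that your argument does not control. The same difficulty recurs, compounded, in your parametrized moduli space argument for showing $\Psi\circ\Phi\simeq\op{id}$. The paper's slope-modification trick is designed precisely to sidestep this problem: it trades a non-monotone $s$-dependent homotopy for two $s$-independent Hamiltonians with identical Floer complexes. To salvage your approach you would need to prove a non-monotone continuation estimate from scratch, which is a substantially harder statement than the lemma itself.
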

\begin{proof}
We prove here that there exists a smooth family $H^s:\Sigma\times [1,+\infty)\to\R,\:s\in[0,1]$ of Hamiltonians without 1-periodic orbits such that
\begin{itemize}
    \item $H^0(x,r)= a\cdot h(x)\cdot r,$ for $(x,r)\in\Sigma\times [1,+\infty),$
    \item $H^s(x,r)= a\cdot h(x)\cdot r,$ for $s\in[0,1]$ and for $r$ in a neighbourhood of 1,
    \item $H^1(x,r)= b\cdot h(x)\cdot r,$ for $r$ big enough.
\end{itemize}
Informally, this means that one can modify the slope smoothly on the cylindrical end from $a\cdot h$ to $b\cdot h$ without creating any additional 1-periodic orbits. Hence, it is possible to find Floer data $(H^a, J^a)$ and $(H^b, J^b)$ that compute $HF_\ast(a\cdot h)$ and $HF_\ast(b\cdot h)$, respectively, and such that the chain complexes $CF_\ast(H^a, J^a)$ and $CF_\ast(H^b, J^b)$ are identical. As opposed to the case where $h$ is constant, the proof is not direct.

Denote by $\mathfrak{X}(\Sigma)$ the space of smooth vector fields on $\Sigma$ endowed with the $C^1$ topology, and denote by $\phi^X_t:\Sigma\to\Sigma$ the flow of a vector field $X\in \mathfrak{X}(\Sigma).$ Denote by $Y^h$ the vector field of the contact isotopy furnished by the contact Hamiltonian $h$. The map
\[\mathfrak{X}(\Sigma)\times\Sigma\to\Sigma\quad:\quad (X,p)\mapsto \phi^X_t(p)\]
is continuous for all $t\in\R.$ This (together with the time-1 map of the flow of $c\cdot Y^h$ not having any fixed points for $c\in[a,b]$) implies that there exists an open neighbourhood $U\subset \mathfrak{X}(\Sigma)$ of 
\[\{c\cdot Y^h\:|\:c\in[a,b]\}\]
such that $\phi^Z_1$ has no fixed points for all $Z\in U.$

Let $\delta>0,$ and let $\mu:\R^+\to\R$ be a smooth function such that
\begin{itemize}
    \item $\mu(r)= a\cdot r,$ for $r<2,$
    \item $\mu(r)= b\cdot r,$ for $r$ sufficiently large,
    \item $\frac{\mu(r)}{r}\in[a,b],$ for all $r\in\R^+,$
    \item $\abs{\mu'(r)-\frac{\mu(r)}{r}}<\delta,$ for all $r\in\R^+.$
\end{itemize}
(Such a function $\mu$ can be constructed in the following way. Let $\kappa:\R^+\to\R$ be a compactly supported smooth function such that $\op{supp} \kappa\subset (2,+\infty),$ such that $\kappa(r)\leqslant\frac{\delta}{r}$ for all $r\in\R^+,$ and such that $\int_{\R} \kappa(r)dr=b-a.$ The conditions on $\kappa$ are not contradicting each other because $\int_2^{+\infty}\frac{\delta}{r}dr=+\infty,$ and therefore, $\int_{\R}\kappa(r)dr$ can be chosen arbitrary large without violating the condition $\kappa(r)\leqslant\frac{\delta}{r}.$ The function $\mu(r):=a\cdot r + r\cdot \int_0^{r}\kappa(s)ds$ satisfies the conditions above. See Figure~{\ref{fig:kappaMuFunctions}} for graphs of $\kappa$ and $\mu$.)

\begin{figure}[ht]
    \centering
    \includegraphics[scale=0.2]{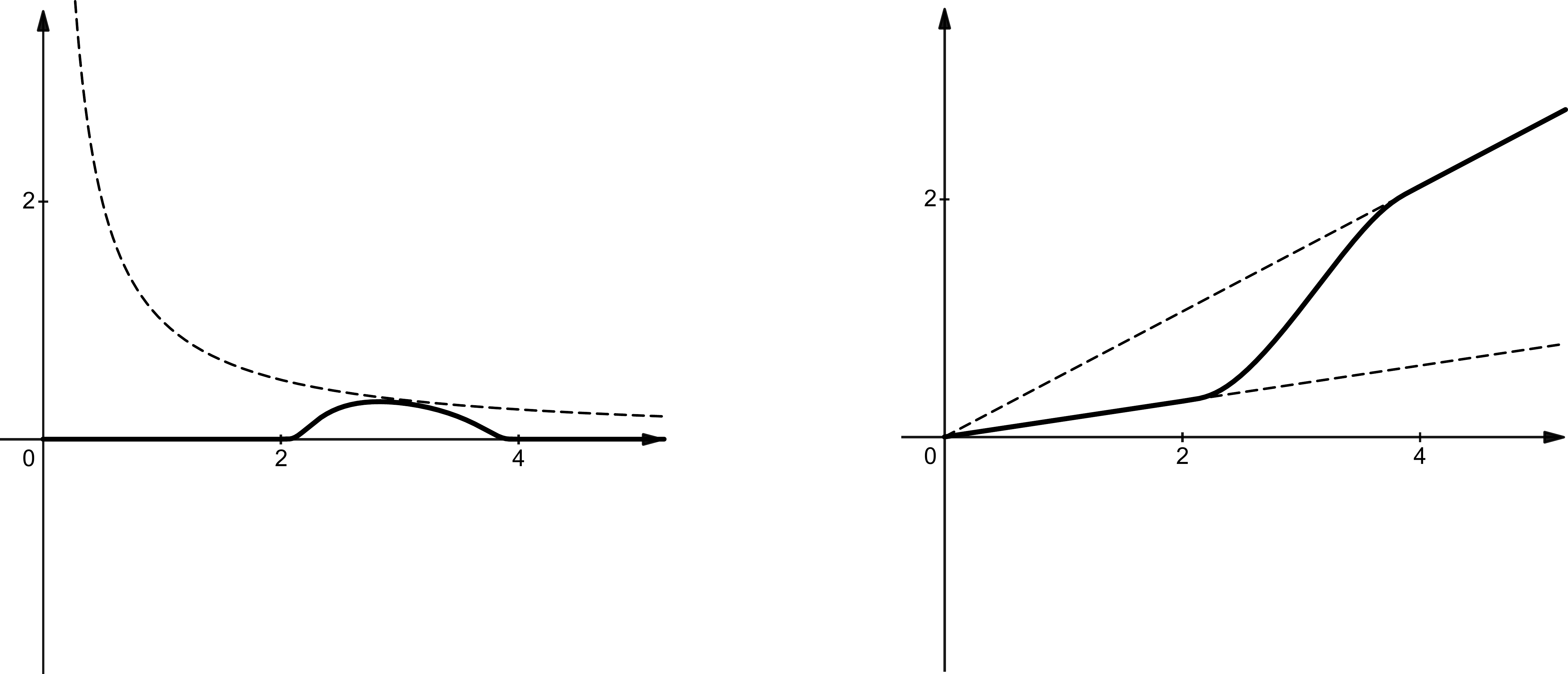}
    \caption{An illustration of the functions $\kappa$ (left) and $\mu$ (right).} \label{fig:kappaMuFunctions}
\end{figure}

We will show that when $\delta>0$ is sufficiently small, the homotopy
\[ H^s: \Sigma\times[1,+\infty)\to \R\quad:\quad (x,r)\mapsto \left((1-s)\cdot a\cdot r + s\cdot \mu(r)\right)\cdot h(x)\]
satisfies the conditions from the beginning of this proof. The only non-trivial condition to check is that $H^s$ has no 1-periodic orbits for all $s\in[0,1].$ The vector field of the Hamiltonian $H^s$ is equal to
\[ X_{H^s}(x,r)= \left( (1-s)\cdot a +s\cdot \frac{\mu(r)}{r} \right)\cdot Y^h(x) + s\cdot\left( \frac{\mu(r)}{r} - \mu'(r) \right)\cdot h(x)\cdot R(x), \]
where $R=R^\alpha$ is the Reeb vector field on $\Sigma$ (the computation used $dh(R)=0$). In particular, the flow of $H^s$ preserves the submanifolds $\Sigma\times \{r\}, r\in[1,+\infty).$ Therefore, it is enough to prove that the restriction of the flow of $H^s$ to $\Sigma\times\{r\}$ has no 1-periodic orbits for all $r\in[1,+\infty)$ and all $s\in[0,1].$ By the assumptions, $(1-s)\cdot a +s\cdot \frac{\mu(r)}{r}\in[a,b]$ for all $s\in[0,1]$ and $r\in[1, +\infty).$ Hence, for $\delta>0$ sufficiently small, $X_{H^s}(\cdot, r)\in U$ for $s\in[0,1]$ and $r\in[1,+\infty).$ Consequently, for $\delta>0$ sufficiently small, the flow of $X_{H^s}$ has no 1-periodic orbits.
\end{proof}

The next lemma is a topological fact that was used in the final step in the proof of Proposition~{\ref{prop:floertosingular}} above.

\begin{lem}\label{lem:abovethegraph}
Let $X$ be a compact topological space, let $\varepsilon\in\R^+$ be a positive real number, and let $f:X\to[0,+\infty)$ be a continuous function. Then, there exists a homeomorphism
\[\psi:X\times [0,+\infty)\to \left\{ (x,r)\in X\times [0,+\infty)\:|\: r\geqslant f(x) \right\}\]
such that $\psi(x,0)=(x, f(x))$ for all $x\in X,$ and such that $\psi(x,r)=(x,r)$ if $r\geqslant f(x)+\varepsilon.$
\end{lem}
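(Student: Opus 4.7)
The plan is to construct $\psi$ explicitly as a fiberwise, piecewise-linear stretching that depends continuously on $x\in X$. For each fixed $x$, I need a homeomorphism $\psi_x:[0,+\infty)\to[f(x),+\infty)$ satisfying $\psi_x(0)=f(x)$ and $\psi_x(r)=r$ for $r\geqslant f(x)+\varepsilon$; continuity at the seam $r=f(x)+\varepsilon$ forces $\psi_x(f(x)+\varepsilon)=f(x)+\varepsilon$. The natural choice is to interpolate linearly on $[0,f(x)+\varepsilon]$ between these two prescribed values and to be the identity afterwards.

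Explicitly, I would set
\[
\psi(x,r):=\begin{cases}
\left(x,\ f(x)+\dfrac{\varepsilon\cdot r}{f(x)+\varepsilon}\right) & \text{if }0\leqslant r\leqslant f(x)+\varepsilon,\\[4pt]
(x,r) & \text{if }r\geqslant f(x)+\varepsilon.
\end{cases}
\]
The two pieces both give $(x,f(x)+\varepsilon)$ on the overlap $r=f(x)+\varepsilon$, so $\psi$ is well defined. The required boundary conditions $\psi(x,0)=(x,f(x))$ and $\psi(x,r)=(x,r)$ for $r\geqslant f(x)+\varepsilon$ hold by construction, and the image lies in $\{r\geqslant f(x)\}$ since in the first piece the second coordinate equals $f(x)$ plus a non-negative number.

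For each fixed $x$, the fiber map $\psi_x$ is continuous and strictly increasing (slope $\varepsilon/(f(x)+\varepsilon)>0$ on the first piece, slope $1$ on the second, with matching values at the seam), hence a homeomorphism of $[0,+\infty)$ onto $[f(x),+\infty)$. Thus $\psi$ is a bijection, and its inverse is given fiberwise by
\[
\psi_x^{-1}(s)=\begin{cases}
\dfrac{(f(x)+\varepsilon)(s-f(x))}{\varepsilon} & \text{if }f(x)\leqslant s\leqslant f(x)+\varepsilon,\\[4pt]
s & \text{if }s\geqslant f(x)+\varepsilon.
\end{cases}
\]
Since $f(x)+\varepsilon\geqslant\varepsilon>0$ uniformly in $x$, the denominators in both formulas stay bounded away from zero, so the expressions depend continuously on $(x,r)$ and $(x,s)$ respectively.

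The only point requiring attention is the joint continuity of $\psi$ and $\psi^{-1}$ across the seam $r=f(x)+\varepsilon$, whose location depends on $x$. This is handled by the standard pasting lemma: the two closed pieces $\{r\leqslant f(x)+\varepsilon\}$ and $\{r\geqslant f(x)+\varepsilon\}$ in $X\times[0,+\infty)$ cover the domain, are closed by continuity of $f$, and the two continuous definitions agree on their intersection. The same applies to $\psi^{-1}$. Compactness of $X$ is not actually used in the construction; it is only an ambient hypothesis inherited from the setting of the preceding proposition.
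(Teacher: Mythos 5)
Your construction is exactly the one in the paper: the same piecewise-linear fiberwise stretching $r\mapsto f(x)+\varepsilon r/(f(x)+\varepsilon)$ on $[0,f(x)+\varepsilon]$ and the identity beyond, with the same explicit inverse. The only addition is that you spell out the pasting-lemma argument for continuity across the $x$-dependent seam, which the paper leaves implicit; the proof is correct and essentially identical.
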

\begin{proof}
Denote 
\[Y:=\left\{ (x,r)\in X\times [0,+\infty)\:|\: r\geqslant f(x) \right\}.\]
A homeomorphism satisfying the conditions of the lemma can be constructed explicitly as follows. Let
\[\psi(x,r):=\left\{ \begin{tabular}{c l}
     $\left(x, f(x)+ r\cdot \frac{\varepsilon}{f(x)+\varepsilon} \right)$& \text{for }$r\in[0, f(x)+\varepsilon]$  \\
     $(x,r)$& for $r\in[f(x)+\varepsilon,+\infty).$ 
\end{tabular} \right.\]
The function $\psi:X\times[0,+ \infty)\to Y$ is well defined and continuous. The function
\[(y,s)\mapsto \left\{ \begin{tabular}{c l}
     $\left(y, (s-f(y))\cdot \frac{f(y)+\varepsilon}{\varepsilon}\right)$& \text{for }$s\in[f(y), f(y)+\varepsilon]$  \\
     $(y,s)$& for $s\in[f(y)+\varepsilon,+\infty).$ 
\end{tabular} \right. \]
is a well-defined continuous function $Y\to X\times [0,+\infty)$, and it is inverse to $\psi.$ Hence $\psi$ is a homeomorphism. 
\end{proof}

\printbibliography
\end{document}